\documentclass[12pt,letterpaper]{article}

\usepackage{amssymb,amsfonts,amscd,amsthm}
\usepackage[all,arc]{xy}
\usepackage{enumerate, bbm}
\usepackage{mathrsfs}
\usepackage{tikz}
\usepackage[left=0.9in,top=0.9in,right=0.9in,bottom=0.9in]{geometry}
\usepackage{mathtools}
\usepackage{hyperref}
\usepackage{color}
\usepackage{graphicx}
\usepackage{enumitem} 
\graphicspath{ {TexImages/} }
\usepackage{cleveref}

\newtheorem{thm}{Theorem}[section]
\newtheorem{cor}[thm]{Corollary}
\newtheorem{prop}[thm]{Proposition}
\newtheorem{lem}[thm]{Lemma}
\newtheorem{claim}[thm]{Claim}

\newtheorem{prob}[thm]{Problem}

\newtheorem{conj}[thm]{Conjecture}

\theoremstyle{definition}
\newtheorem{defn}{Definition}

\hypersetup{
	colorlinks,
	citecolor=blue,
	filecolor=blue,
	linkcolor=blue,
	urlcolor=blue,
	linktocpage
}

\setlist[enumerate]{itemsep=2ex, topsep=2ex} 
\setlist[itemize]{itemsep=2ex, topsep=2ex}


\newcommand{\F}{\mathbb{F}}

\newcommand{\E}{\mathbb{E}}

\newcommand{\al}{\alpha}

\newcommand{\ep}{\varepsilon}

\newcommand{\Om}{\Omega}

\newcommand{\del}{\delta}

\renewcommand{\l}{\left}
\renewcommand{\r}{\right}

\newcommand{\half}{\frac{1}{2}}

\newcommand{\sm}{\setminus}
\newcommand{\sub}{\subseteq}

\renewcommand{\c}[1]{\mathcal{#1}}
\renewcommand{\b}[1]{\mathbf{#1}}

\newcommand{\ol}[1]{\overline{#1}}

\newcommand{\f}[2]{\frac{#1}{#2}}

\newcommand{\mr}[1]{\mathrm{#1}}


\renewcommand{\SS}[1]{\textcolor{red}{#1}}

\newcommand{\BG}[1]{\textcolor{blue}{#1}}


\newcommand{\ex}{\mr{ex}}

\newcommand{\BB}{\mathcal{B}}

\newcommand{\fff}{\beta_r(F)}

\newcommand{\N}{\c{N}} 


\parskip=6pt
\parindent=0pt

\title{Clique Supersaturation}

\author{Quentin Dubroff\footnote{Dept.\ of Mathematics, Rutgers University {\tt qcd2@math.rutgers.edu}}  \and Benjamin Gunby\footnote{Dept.\ of Mathematics, Rutgers University {\tt bg570@rutgers.edu}} \and Bhargav Narayanan\footnote{Dept.\ of Mathematics, Rutgers University {\tt narayanan@math.rutgers.edu}. This research was supported by NSF grants DMS-2237138 and CCF-1814409, and a Sloan Research Fellowship.} \and  Sam Spiro\footnote{Dept.\ of Mathematics, Rutgers University {\tt sas703@scarletmail.rutgers.edu}. This material is based upon work supported by the National Science Foundation Mathematical Sciences Postdoctoral Research Fellowship under Grant No. DMS-2202730.}}
\date{\today}
\begin{document}
	\maketitle
	\begin{abstract}
		We study how many copies of a graph $F$ that another graph $G$ with a given number of cliques is guaranteed to have.  For example, one of our main results states that for all $t\ge 2$, if $G$ is an $n$ vertex graph with $kn^{3/2}$ triangles and $k$ is sufficiently large in terms of $t$, then $G$ contains at least 
		\[\Omega(\min\{k^t n^{3/2},k^{\frac{2t^2}{3t-1}}n^{\frac{5t-2}{3t-1}}\})\]
		copies of $K_{2,t}$, and furthermore, we show these bounds are essentially best-possible provided either $k\ge n^{1/2t}$ or if certain bipartite-analogues of well known conjectures for Tur\'an numbers hold.
	\end{abstract}
	\section{Introduction}
	In this paper, we shall study a generalised supersaturation problem. Broadly speaking, `extremal problems' ask for the largest `size' $N$ that a combinatorial object can have before containing at least one structure $F$, and in turn, `supersaturation problems' ask about how many copies of $F$ are guaranteed to exist if a combinatorial object has `size' substantially larger than the extremal value $N$. In addition to being natural refinements of extremal problems in their own right, supersaturation problems also arise often in a number of other contexts. For example, supersaturation results were used by Erd\H{o}s and Simonovits~\cite{erdos1984cube} to obtain upper bounds on the Tur\'an number of the hypercube.  More recently, supersaturation has proven to be a key ingredient for various asymptotic enumeration results proved using the method of hypergraph containers developed independently by Balogh, Morris, and Samotij~\cite{balogh2015independent} and by Saxton and Thomasson~\cite{saxton2015hypergraph}. 
	
	Here, we investigate the following problem: how many copies of a graph $F$ can we guarantee in a graph $G$ with a specified number of copies of another graph $H$?  More precisely, given two graphs $F$ and $G$, we define
	\[\N(F,G)=\#\,\textrm{subgraphs of }G\textrm{ isomorphic to }F,\]
	and our aim then is to prove lower bounds on $\N(F,G)$ as a function of $\N(H,G)$.  We will informally refer to problems of this form as \textit{generalized supersaturation problems}.
	
	An immediate obstruction to this problem is the existence of $F$-free graphs $G$ which contain a large number of copies of $H$. To this end, we define the \textit{generalized Tur\'an number},  first introduced by Alon and Shikhelman~\cite{alon2016many}, by
	\[\ex(n,F,H)=\max\{ \N(H,G): G\textrm{ is an }F\textrm{-free graph on }n\textrm{ vertices}\},\]
	and we write $\ex(n,F):=\ex(n,F,K_2)$ to denote the (classical) \textit{Tur\'an number of $F$}.  Observe that, by definition, if $G$ is an $n$ vertex graph then
	\begin{equation}\N(H,G)>\ex(n,F,H)\implies \N(F,G)>0,\label{eq:triv}\end{equation}
	and this is best possible since there exist $F$-free graphs with $\N(H,G)\le \ex(n,F,H)$.
	
	We are interested in quantitative versions of the trivial bound \eqref{eq:triv}.  For example, Halfpap and Palmer~\cite{halfpap2021supersaturation} proved that if $\chi(F)>\chi(H)$ and $\ep>0$, then any $n$ vertex graph $G$ with $\N(H,G)\ge (1+\ep) \ex(n,F,H)$ has $\N(F,G)=\Om_\ep(n^{v(F)})$, i.e., $G$ contains a constant proportion of the copies of $F$ in $K_n$.  Hence, the central interest in the case $\chi(F)>\chi(H)$ is in proving asymptotically tight bounds for $\N(F,G)$ as a function of $\ep$. For example, work of Razborov~\cite{razborov2008minimal} completely solves this asymptotic problem of minimizing the number of $K_3$'s in a graph with a given number of edges, and this was later generalized by Reiher~\cite{reiher2016clique} to handle general cliques $K_r$ in place of the triangle $K_3$.
	
	In this paper, we focus on the `degenerate' setting $\chi(F)\le \chi(H)$ where the focus for supersaturation centers around proving coarse (i.e., order of magnitude) bounds on $\N(F,G)$.  One classical example in this setting is the following conjecture of Erd\H{o}s and Simonovits~\cite{erdos1984cube}.
	\begin{conj}[\cite{erdos1984cube}]\label{conj:ErdosSimonovits}
		Let $F$ be a graph with $\ex(n,F)=O(n^\al)$.   If $G$ is an $n$-vertex graph with $e(G)=kn^\al$ and $k\ge k_0(F)$, then
		\[\c{N}(F,G)=\Om(k^{e(F)} n^{v(F)-(2-\al)e(F)}).\]
	\end{conj}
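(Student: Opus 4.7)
The plan is to combine random vertex sampling with the standard edge-deletion bound. The deletion argument observes that if one removes a single edge from every copy of $F$ in $G$, the remaining graph is $F$-free and thus has at most $\ex(n, F) = O(n^\al)$ edges, giving $\c{N}(F, G) \ge e(G) - \ex(n, F) = \Om(kn^\al)$. This matches the conjecture only when $e(F) = 1$, so to extract the full power $k^{e(F)}$ one must work on smaller scales. I would therefore pass to a random induced subgraph $G[V']$, where each vertex of $V(G)$ is kept independently with probability $q$, and apply the deletion bound there.

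For such a sample, $\E[e(G[V'])] = q^2 k n^\al$ and $\E[\ex(|V'|, F)] = O((qn)^\al)$ by concavity/concentration of $|V'|^\al$ around $(qn)^\al$. Setting $q = \Theta(k^{-1/(2-\al)})$ makes these comparable, yielding $\E[\c{N}(F, G[V'])] = \Om(q^2 k n^\al)$, and since $\E[\c{N}(F, G[V'])] = q^{v(F)} \c{N}(F, G)$, rearrangement gives
\[\c{N}(F, G) = \Om\l(k^{(v(F) - \al)/(2-\al)} n^\al\r).\]
A short exponent comparison shows this matches the conjectured bound $k^{e(F)} n^{v(F) - (2-\al)e(F)}$ only when $k$ is near the dense end of its range ($k$ close to $n^{2-\al}$); for $k$ near the threshold $k_0(F)$ the simple bound is weaker by a power of $n^{2-\al}/k$.

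To close the gap, I would replace uniform vertex sampling by dependent random choice: pick a random set $T \sub V(G)$ of some small size $t$, let $V' = \bigcap_{v \in T} N(v)$, and note that $\E[|V'|]$ is of order $n \cdot (kn^{\al - 2})^t$ while any $F$-copy in $G[V']$ meeting all of $T$ produces a larger structure in $G$. Tuning $t$ to the bipartite structure of $F$ (for instance $t = s-1$ when $F = K_{s,r}$) and iterating this extraction step should, in favorable cases, recover the full $k^{e(F)}$ count. For $F = K_{s,t}$ this mirrors the standard dependent-random-choice proof of the K\H{o}v\'ari-S\'os-Tur\'an bound.

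The main obstacle is that the conjecture is essentially a Sidorenko-type statement just above the Tur\'an threshold: it asserts that any graph of density $kn^{\al-2}$ contains at least as many copies of $F$ as the random graph of the same density. A proof for arbitrary $F$ with $\ex(n, F) = O(n^\al)$ seems to require Sidorenko's conjecture for the underlying bipartite structure, which is known only for trees, complete bipartite graphs, even cycles, and a handful of structured families. I would therefore expect an unconditional implementation of this plan to succeed only for such specific $F$, and a truly general proof to require genuinely new ideas.
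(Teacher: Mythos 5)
The statement you were asked about is Conjecture~\ref{conj:ErdosSimonovits}, the Erd\H{o}s--Simonovits supersaturation conjecture. The paper does not prove it --- it is stated as an open conjecture, known only for special families (even cycles, graphs satisfying Sidorenko's conjecture, and complete bipartite graphs as in Proposition~\ref{prop:ESKst}, which is the only instance the paper actually uses). So there is no ``paper proof'' to compare against, and your proposal, by your own admission in its final paragraph, is not a proof either. To be concrete about where it falls short: the deletion-plus-sampling argument you sketch is correct as far as it goes (one small fix: for $1\le\al\le 2$ the function $x^\al$ is convex, so Jensen does not bound $\E[|V'|^\al]$ the way you want; you need concentration of the binomial $|V'|$ around $qn$, e.g.\ $\E[|V'|^\al]\le\E[|V'|^2]^{\al/2}=O((qn)^\al)$ for $qn\ge 1$), and it yields $\c{N}(F,G)=\Om(k^{(v(F)-\al)/(2-\al)}n^\al)$. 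As you note, this matches the conjecture only at $k\approx n^{2-\al}$ and is off by a factor of roughly $(n^{2-\al}/k)^{(v(F)-\al)/(2-\al)-e(F)}$ otherwise, which is a genuine power of $n$ for small $k$. The dependent-random-choice step that is supposed to close this gap is not carried out, and cannot be carried out in general: tuning $t$ to ``the bipartite structure of $F$'' presupposes exactly the kind of Sidorenko-type local counting that is the open content of the conjecture. For $F=K_{s,t}$ specifically, a clean route that actually works (and is what Proposition~\ref{prop:ESKst} rests on) is the Erd\H{o}s--Simonovits convexity argument: count copies of $K_{s,t}$ by summing $\binom{|N(S)|}{t}$ over $s$-sets $S$ and applying convexity twice, which gives the full $k^{st}n^s$ bound without any sampling.

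Your diagnosis of the situation --- that the conjecture is essentially a just-above-threshold Sidorenko statement and that a general proof needs new ideas --- is accurate, and it is the honest conclusion. But the proposal as written establishes only the weaker sampling bound, not the conjectured one.
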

	We note that this conjecture, if true, would be best possible by considering $G$ to be the random graph with $kn^\al$ edges.  \Cref{conj:ErdosSimonovits} is known to hold (possibly with non-optimal values of $\al$) for a large number of graphs, such as even cycles~\cite{morris2016number} and all graphs which satisfy Sidorenko's conjecture~\cite[Theorem 9]{sidorenko1991inequalities}.  One result of particular importance to us will be the following which confirms \Cref{conj:ErdosSimonovits} for complete bipartite graphs when $\al=2-1/s$. 
	\begin{prop}[\cite{erdHos1983supersaturated}]\label{prop:ESKst}
		For all $s\le t$, there exists a constant $C=C(s,t)$ such that if $G$ is an $n$-vertex graph with $e(G)=kn^{2-1/s}$ and $k\ge C$, then $\N(K_{s,t},G)=\Om(k^{st}n^s)$.
	\end{prop}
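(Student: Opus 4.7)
The plan is a two-step double-counting argument combined with convexity (Jensen's inequality for binomial coefficients), which refines the standard Kővári–Sós–Turán counting into a supersaturated lower bound. For each $s$-subset $A\subseteq V(G)$, let $d(A):=|\bigcap_{v\in A}N(v)|$ denote the size of the common neighborhood. Every pairing of such an $A$ with a $t$-subset of its common neighborhood yields a copy of $K_{s,t}$, and each copy is counted at most twice (exactly once when $s<t$), so
\[
\N(K_{s,t},G)\ \ge\ \tfrac12\sum_{A\in\binom{V(G)}{s}}\binom{d(A)}{t}.
\]

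The first convexity step lower-bounds $\sum_A d(A)$: switching the order of summation gives $\sum_{A}d(A)=\sum_{v}\binom{d(v)}{s}$, with both sides enumerating pairs $(v,A)$ such that $A\subseteq N(v)$. Since $\sum_v d(v)=2kn^{2-1/s}$, applying Jensen to the convex function $x\mapsto\binom{x}{s}$ yields
\[
\sum_v \binom{d(v)}{s}\ \ge\ n\binom{2kn^{1-1/s}}{s}\ =\ \Om_s(k^sn^s),
\]
where the last step uses $\binom{x}{s}\asymp x^s/s!$ for $x\gg s$, which is valid once $k$ exceeds a constant depending on $s$.

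Since there are $\binom{n}{s}=\Theta_s(n^s)$ choices of $A$, the previous display shows that the average value of $d(A)$ is at least $\Om_s(k^s)$. A second application of Jensen, now to $x\mapsto\binom{x}{t}$, gives
\[
\sum_A\binom{d(A)}{t}\ \ge\ \binom{n}{s}\binom{\overline d}{t}\ =\ \Om_{s,t}(k^{st}n^s),
\]
where $\overline d$ is the average of $d(A)$; the final equality uses $\binom{x}{t}\asymp x^t/t!$ once $\overline d\gg t$, which holds provided $k\ge C(s,t)$ is large enough. Combining with the opening inequality then yields $\N(K_{s,t},G)=\Om(k^{st}n^s)$, as claimed.

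The argument is essentially mechanical --- the only thing to watch is ensuring that both Jensen applications operate in the regime where $\binom{x}{r}\asymp x^r/r!$. This is exactly what the hypothesis $k\ge C(s,t)$ provides: concretely, one needs $2kn^{1-1/s}\gg s$ and $k^s\gg t$, both of which hold once $C(s,t)$ is chosen so that $C(s,t)^s$ exceeds a suitable multiple of $t$. No serious obstacle is anticipated.
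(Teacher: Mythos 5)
Your proof is correct and is exactly the standard Erd\H{o}s--Simonovits double-convexity argument for this proposition, which the paper itself does not reprove but simply cites from \cite{erdHos1983supersaturated}. The two applications of Jensen's inequality (first to $\sum_v \binom{d(v)}{s}$ to lower-bound $\sum_A d(A)$, then to $\sum_A\binom{d(A)}{t}$), together with the requirement that $k\ge C(s,t)$ keeps both averages in the regime $\binom{x}{r}\asymp x^r/r!$, match the intended proof.
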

	
	Other generalized supersaturation results in the degenerate setting include work of Cutler, Nir, and Radcliffe~\cite{cutler2022supersaturation} who studied the case when $F,H$ are each either cliques or stars; as well as Gerbner, Nagy, and Vizer~\cite{gerbner2022unified} who initiated the systematic study of generalized supersaturation results and who proved a number of results when $F,H$ are both bipartite.

	\subsection{Our results}
	In this paper, we focus on generalized supersaturation problems when $H=K_r$.  That is, we ask how many copies of a given graph $F$ is guaranteed in another graph $G$ if $G$ has $N$ copies of $K_r$, and we informally refer to this as the \textit{clique supersaturation problem}. For example, we have the following.
	\begin{lem}\label{lem:Kruskal}
		If $F$ is a graph with $v(F)\le r$ and if $G$ is a graph with $\N(K_r,G)=N$, then $\N(F,G)=\Om(N^{v(F)/r})$.  Moreover, the graph $G$ consisting of a clique of size $N^{1/r}$ satisfies $\N(K_r,G)=\Om(N)$ and $\N(F,G)=O(N^{v(F)/r})$.
	\end{lem}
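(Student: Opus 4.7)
The plan is to deduce both parts from the Kruskal–Katona theorem applied to the $r$-uniform hypergraph whose edges are the vertex sets of the $K_r$-subgraphs of $G$.

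Set $s = v(F)$. Let $\mathcal{H}$ be the $r$-uniform hypergraph on $V(G)$ whose edges are exactly the vertex sets of the $N$ copies of $K_r$ in $G$. First I would observe that the shadow of $\mathcal{H}$ at level $s$, i.e., the collection of $s$-subsets of $V(G)$ contained in at least one edge of $\mathcal{H}$, consists entirely of vertex sets of cliques $K_s$ in $G$; this is because any $s$-subset lying inside an $r$-clique of $G$ is itself an $s$-clique. The Kruskal–Katona theorem then gives
\[
\N(K_s, G) \;\ge\; |\partial_s \mathcal{H}| \;=\; \Om(N^{s/r}).
\]
Next, since $F$ has $s$ vertices and every copy of $F$ sits inside a unique $K_s$ in $G$ (determined by its vertex set), while each $K_s$ contains exactly $s!/|\mr{Aut}(F)|$ copies of $F$, we conclude
\[
\N(F, G) \;=\; \frac{s!}{|\mr{Aut}(F)|}\,\N(K_s, G) \;=\; \Om(N^{v(F)/r}),
\]
which is the desired lower bound.

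For the moreover statement, I would take $G = K_n$ with $n = \lceil N^{1/r}\rceil$. A direct calculation gives $\N(K_r, G) = \binom{n}{r} = \Theta(n^r) = \Theta(N)$, and every copy of $F$ in $K_n$ is determined by a choice of $v(F)$ vertices together with an embedding of $F$ into the resulting clique, so $\N(F,G) \le \binom{n}{v(F)} v(F)! = O(n^{v(F)}) = O(N^{v(F)/r})$. This matches the lower bound and shows tightness.

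There is really no substantial obstacle here; the only content is recognizing that clique counts in a graph behave exactly like a shadow problem for an $r$-uniform set system, so that Kruskal–Katona applies directly. No additional extremal input is required.
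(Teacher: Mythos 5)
Your proof is correct and follows exactly the route the paper intends: apply Kruskal--Katona to the $r$-uniform hypergraph of $K_r$-vertex-sets, observe that the level-$v(F)$ shadow consists of vertex sets of $v(F)$-cliques, and convert $K_{v(F)}$'s into copies of $F$; the tightness example is likewise the same. One small inaccuracy: $\N(F,G)$ is only at least, not equal to, $\frac{s!}{|\mathrm{Aut}(F)|}\N(K_s,G)$, since a copy of $F$ need not have a clique as its vertex set when $F$ is not complete --- but this is the direction you need, so nothing breaks.
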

	\Cref{lem:Kruskal} follows immediately from the Kruskal-Katona theorem, which implies that any graph with $N$ copies of $K_r$ has at least $\Om(N^{v(F)/r})$ copies of $K_{v(F)}$. Due to \Cref{lem:Kruskal}, we will only consider $F$ with $v(F)>r$ throughout this paper.
	
	Returning to the general problem: when $r=2$, \Cref{conj:ErdosSimonovits} predicts that the solution to the clique supersaturation problem is always achieved by the random graph $G_{n,p}$ with $N$ copies of $K_2$, i.e.\ when $p=N n^{-2}$.  For larger $r$, it again makes sense to look at what happens for $G_{n,p}$.  To this end, if we want $G_{n,p}$ to have on the order of $N$ copies of $K_r$, then we should take $p=(N n^{-r})^{1/{r\choose 2}}$, which gives
	\begin{equation}\E[\N(F,G_{n,p})]=\Theta\left((N n^{-r})^{e(F)/{r\choose 2}}n^{v(F)}\right).\label{eq:Gnp}\end{equation}
	Our first main result significantly improves upon this trivial bound for a wide range of $F$ and $r\ge 3$. For this result, we recall that a graph $F$ is \textit{2-balanced} if for all $F'\sub F$ with $v(F')\ge 3$, we have \[\frac{e(F')-1}{v(F')-2}\le \frac{e(F)-1}{v(F)-2}.\]
	
	\begin{thm}\label{thm:randomCliques}
		Let $F$ be a 2-balanced graph  with $e(F)\ge 2$ and let $2\le r< v(F)$ be an integer.  For all $1\le N\le {n\choose r}$, there exists an $n$-vertex graph $G$ with $\N(K_r,G)=\Om(N)$ and with
		\[\N(F,G)=O\left((N n^{-r})^{e(F)\fff} n^{v(F)}\right),\]
		where
		\[\fff:=\frac{v(F)-2}{(r-2)(e(F)-1)+v(F)-2}\]
	\end{thm}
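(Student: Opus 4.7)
I would construct $G$ probabilistically, splitting cases based on the $2$-density $d_2(F) := (e(F)-1)/(v(F)-2)$. For the trivial regime $N \le n/r$, take $G$ to be a disjoint union of $\lfloor N \rfloor$ copies of $K_r$; then $\mathcal{N}(K_r,G) = \Theta(N)$ and $\mathcal{N}(F,G) = 0$ (since $v(F) > r$).

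In the regime $d_2(F) \ge (r+1)/2$, take $G := G_{n,p}$ with $p := \min\{1,\, (N/n^r)^{2/(r(r-1))}\}$, chosen so that $\mathbb{E}[\mathcal{N}(K_r,G)] = \Theta(N)$. Direct calculation gives $\mathbb{E}[\mathcal{N}(F,G)] = \Theta(n^{v(F)} p^{e(F)})$, and the hypothesis $d_2(F) \ge (r+1)/2$ is equivalent to $\beta_r(F) \le 2/(r(r-1))$, so this expectation meets the target upper bound. Janson's inequality (which applies cleanly because $F$ is $2$-balanced) together with a standard alteration argument then yields a deterministic $G$ with the claimed counts.

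The main work is the regime $d_2(F) < (r+1)/2$, where $G_{n,p}$ alone produces too many copies of $F$. Here I would take $G := H \cup G_{n,p_0}$, where $H$ is a deterministic $F$-free graph carrying $\Theta(\min\{N,\, \ex(n,F,K_r)\})$ copies of $K_r$ --- drawn from the theory of Alon--Shikhelman generalized Tur\'an numbers (e.g.\ polarity graphs of projective planes for $F = K_{2,t}$) --- and $p_0$ is tuned so that $G_{n,p_0}$ supplies any additional $K_r$'s when $N > \ex(n,F,K_r)$. The key computation is bounding $\mathbb{E}[\mathcal{N}(F,G)]$: for each embedding of $F$ into $V(G)$, one enumerates over subgraphs $F' \subseteq F$ indicating which edges of $F$ are supplied by $H$, each term contributing roughly $\mathcal{N}(F',H) \cdot n^{v(F)-v(F')} \cdot p_0^{e(F)-e(F')}$. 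The $2$-balanced hypothesis $d_2(F') \le d_2(F)$ for all $F' \subseteq F$ with $v(F') \ge 3$ controls these contributions uniformly, and the resulting optimization over $F'$ produces precisely the exponent $\beta_r(F)$.

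The main obstacle is this hybrid regime: identifying a uniformly available scaffold $H$ for every $2$-balanced $F$ (rather than only specific families like $K_{2,t}$) and bounding the ``mixed'' copies of $F$ using edges from both $H$ and $G_{n,p_0}$. The $2$-balanced condition is the decisive algebraic input here, ensuring that no proper subgraph $F' \subsetneq F$ can concentrate and spoil the target exponent $\beta_r(F)$.
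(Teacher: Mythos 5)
Your first two cases are fine: disjoint copies of $K_r$ when $N$ is small, and $G_{n,p}$ when $d_2(F)\ge (r+1)/2$, where (as you correctly observe) $e(F)\fff\le e(F)/\binom{r}{2}$ and so the theorem promises nothing beyond the random graph. The problem is the main case $d_2(F)<(r+1)/2$ --- the only case in which the theorem is nontrivial --- where the hybrid $H\cup G_{n,p_0}$ cannot work. The theorem must cover $N$ all the way up to $\binom{n}{r}$, so typically $N\gg \ex(n,K_r,F)$ and the random component must by itself supply $\Theta(N)$ copies of $K_r$; this forces $p_0=\Theta\bigl((Nn^{-r})^{1/\binom{r}{2}}\bigr)$ and hence $\E[\N(F,G_{n,p_0})]=\Theta\bigl((Nn^{-r})^{e(F)/\binom{r}{2}}n^{v(F)}\bigr)$. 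In the regime you are treating, this already exceeds the target $(Nn^{-r})^{e(F)\fff}n^{v(F)}$ --- that is exactly the content of the inequality $\fff>\binom{r}{2}^{-1}$ --- so the random part alone ruins the bound and no choice of scaffold $H$ can repair it. Concretely, for $F=C_4$, $r=3$, $N=n^2$, you get $p_0=n^{-1/3}$ and $\Theta(n^{8/3})$ copies of $C_4$ from $G_{n,p_0}$, while the theorem promises $O(n^{12/5})$. (The existence of the scaffold for general $2$-balanced $F$ is not the obstacle; that is the paper's Theorem~\ref{thm:TuranBound}, and it follows from the same construction described next with $m=r$.)

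The missing idea is to take $G$ to be the union of $u$ cliques of an \emph{intermediate} size $m$, each chosen uniformly at random among the $m$-subsets of $[n]$, with $um^r=\Theta(N)$ and $m=\Theta\bigl((Nn^{-r})^{(e(F)-1)/((r-2)(e(F)-1)+v(F)-2)}\,n\bigr)$. Under the conditions $um^2<n^2$ (near edge-disjointness of the cliques) and $u(m/n)^{2-1/d_2(F)}>1$, one bounds $\N(F,G)$ by summing over the ways a copy of $F$ can be covered by the chosen cliques; $2$-balancedness is exactly what shows the dominant covering uses a separate clique for each edge of $F$, giving $\E[\N(F,G)]=O\bigl((um^2n^{-2})^{e(F)}n^{v(F)}\bigr)$, which with $um^r=\Theta(N)$ is the claimed bound. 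Your construction amounts to mixing only the two extreme clique sizes $m=r$ (the graph $H$) and $m=n$ (the graph $G_{n,p_0}$), whereas the optimum genuinely sits strictly in between.
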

	
	Note that this bound is strictly smaller than the bound \eqref{eq:Gnp} from $G_{n,p}$ whenever $\fff> {r\choose 2}^{-1}$,
	and this is equivalent to having 
	\[\frac{r+1}{2}>\frac{e(F)-1}{v(F)-2}\hspace{1em}\mathrm{for\ }r,v(F)\ge 3.\]
	For example, this inequality holds if $F=K_{2,t}$ when $t\ge 2$ and $r\ge 3$. More generally, \Cref{thm:randomCliques} implies the same result holds when $K_r$ is replaced by any $r$-vertex graph $H$.  In this case, the bound does better than the corresponding bound coming from $G_{n,p}$ precisely when
	\[\frac{e(H)-1}{v(H)-2}>\frac{e(F)-1}{v(F)-2}\]
	provided $v(H),v(F)\ge 3$.
	
	A crucial part of our proof of \Cref{thm:randomCliques} will be the following general lower bound on $\ex(n,K_r,F)$, which may be of independent interest.
	\begin{thm}\label{thm:TuranBound}
		If $F$ is a 2-balanced graph with $e(F)\ge 2$, then for all $2\le r< v(F)$ we have
		\[\ex(n,K_r,F)=\Om(n^{2-\frac{v(F)-2}{e(F)-1}}).\]
	\end{thm}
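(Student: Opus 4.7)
The plan is to prove the bound via a randomized construction generalizing the Erd\H{o}s--R\'enyi alteration approach for $\ex(n, F)$: instead of sprinkling edges as in $G_{n,p}$, sprinkle $K_r$-cliques at a carefully chosen density, then delete edges to destroy all copies of $F$ while preserving most copies of $K_r$.

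Write $v = v(F)$ and $e = e(F)$, and let $q = c n^{(r-v-(r-2)e)/(e-1)}$ for a small constant $c > 0$. I would form a random graph $G$ on $[n]$ by independently including each $r$-subset $S \in \binom{[n]}{r}$ as a clique with probability $q$, and taking $G$ to be the union of these cliques. A direct computation shows $q \binom{n}{r} = \Theta(n^{2 - (v-2)/(e-1)})$, and since each chosen clique is a $K_r$ in $G$, this lower-bounds $\E[\N(K_r, G)]$.

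The crux is to bound $\E[\N(F, G)]$ by the same order of magnitude. For each injection $\phi \colon V(F) \to [n]$, I would bound $\P(G \supseteq \phi(F))$ by a union bound over partitions of $E(F)$ grouped by ``which edges are jointly covered by a single chosen clique''; a partition with $k$ classes having vertex counts $v_1, \dots, v_k$ contributes $q^k \prod_j \binom{n-v_j}{r-v_j}$. Summing over $\phi$, the ``singleton'' partition ($k = e$, all $v_j = 2$) yields the main term of order $c^e n^{2 - (v-2)/(e-1)}$. The key technical step, where the 2-balanced hypothesis enters, is showing that every other partition contributes at most this main term. Applying the inequality $(e(F')-1)/(v(F')-2) \le (e-1)/(v-2)$ to each class $G_j$ viewed as a subgraph with $v_j \ge 3$ yields $v_j - 2 \ge (e_j - 1)(v-2)/(e-1)$; summing (and using $v_j = 2$ for singleton classes) produces $(\sum_j v_j - v)(e-1) \ge (k-1)(2e - v)$, precisely what one needs to conclude that the singleton partition dominates the sum. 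This bookkeeping is the main obstacle of the argument.

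Combining the two bounds, I would fix $c$ small enough that $\E[\N(F,G)]$ is a small fraction of $\E[\N(K_r, G)]$, and alter $G$ by deleting one edge from each copy of $F$. Each edge of $G$ lies in at most a bounded number of chosen $K_r$'s with high probability: the expected number per edge is $q \binom{n-2}{r-2} = c n^{-(v-2)/(e-1)} = o(1)$, and the maximum over edges is controlled via standard concentration. Hence the total loss of $K_r$'s is of lower order than $\E[\N(K_r, G)]$, and the altered graph is $F$-free with $\Om(n^{2 - (v-2)/(e-1)})$ copies of $K_r$, yielding the claim.
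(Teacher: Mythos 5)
Your proposal is correct and follows essentially the same route as the paper: a union of random $r$-cliques at density chosen so that the expected clique count is $\Theta(n^{2-\frac{v(F)-2}{e(F)-1}})$, a first-moment bound on copies of $F$ via a union bound over minimal clique-coverings in which the 2-balanced hypothesis forces the all-singleton covering to dominate, and then an alteration. The only (harmless) divergence is in the alteration step — you delete an edge per copy of $F$ and control the damage via the fact that each edge lies in $O(1)$ chosen cliques whp (which holds since the per-edge mean $qn^{r-2}=n^{-(v(F)-2)/(e(F)-1)}$ is polynomially small), whereas the paper deletes one covering clique per copy and exploits that every covering uses at least two cliques since $v(F)>r$.
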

	\Cref{thm:TuranBound} recovers the classic result $\ex(n,F)=\Om(n^{2-\frac{v(F)-2}{e(F)-1}})$ for 2-balanced graphs, though we emphasize that the proof for $r>2$ is somewhat more involved than the easy deletion argument which proves the classic $r=2$ case.  We also note that \Cref{thm:TuranBound} can be close to best possible.  For instance, it is known that $\ex(n,K_r,K_{2,t})=\Theta_t(n^{3/2})$ for $t$ sufficiently large in terms of $r$ \cite{alon2016many, spiro2022random}, and in this case, \Cref{thm:TuranBound} gives a lower bound of $\Om(n^{3/2-\frac{1}{4t-2}})$, which is quite close to tight.
	


	We next turn to bounds for specific choices of $F$.  For this, it will not make sense to consider an arbitrary number of cliques $N$, as no copies of $F$ will be guaranteed if $N\le \ex(n,K_r,F)$.  As such, we will normalize the $N$ in our results by replacing $N$ with $kn^\al$ whenever\footnote{We will specifically normalize our results based off the following bounds: (1) $\ex(n,K_r,T)=\Theta(n)$ whenever $T$ is a tree, (2) $\ex(n,K_r,K_{2,t})=O(n^{3/2})$, with this result being tight if $t$ is sufficiently large in terms of $r$, (3) $\ex(n,K_r,K_{s,t})=O(n^{r-{r\choose 2}/s})$ whenever $r\le s$, with this result being tight if $t$ is sufficiently large in terms of $r$; see  \cite{alon2016many, spiro2022random}.} $\ex(n,K_r,F)=O(n^\al)$.
	
	Perhaps the most natural case of $F$ to consider for the clique supersaturation problem is when $F=K_t$ is itself a clique.  The case $t\le r$ is completely solved by the Kruskal-Katona theorem, and the case $t>r$ is solved up to order of magnitude by the result of Halfpap and Palmer~\cite{halfpap2021supersaturation}.  
	
	After cliques, the next simplest case is when $F$ is a tree.  This too is relatively easy to solve. 
	\begin{prop}\label{prop:trees}
		For all trees $T$ and integers $2\le r<v(F)$, there exists a constant $k_0=k_0(T)$ such that if $G$ is an $n$-vertex graph with $\N(K_r,G)=k n$ and $k\ge k_0$, then
		\[\N(T,G)=\Om(k^{(v(T)-1)/(r-1)}n).\]
		Moreover, the graph $G$ consisting of the disjoint union of $k^{-1/(r-1)}n$ cliques of size $k^{1/(r-1)}$ satisfies  $\N(K_r,G)=\Om(kn)$ and $\N(T,G)=O(k^{(v(T)-1)/(r-1)}n)$
	\end{prop}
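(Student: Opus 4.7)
The construction is a direct computation: let $G$ be the disjoint union of $\lfloor n/s \rfloor$ copies of $K_s$ with $s := \lceil k^{1/(r-1)} \rceil$, plus isolated vertices. Then $\N(K_r, G) = \Theta\!\left(\frac{n}{s}\binom{s}{r}\right) = \Theta(nk)$ and $\N(T, G) = \Theta\!\left(\frac{n}{s}\,s^{v(T)}\right) = \Theta\!\left(n k^{(v(T)-1)/(r-1)}\right)$, as required.

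For the lower bound, write $t := v(T)$. The plan is to first pass to a subgraph of high minimum degree, then use copies of $K_r$ there as seeds for building copies of $T$. Set $d^* := c_r k^{1/(r-1)}$ with $c_r$ chosen small enough that $\binom{d^* - 1}{r-1} \leq k/2$. Iteratively delete any vertex whose current degree is less than $d^*$; each deletion destroys at most $k/2$ copies of $K_r$, so after at most $n$ deletions at most $kn/2$ copies are lost, leaving a subgraph $G' \subseteq G$ with $\delta(G') \geq d^*$ and $\N(K_r, G') \geq kn/2$.

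Next, fix any connected subtree $T^* \subseteq T$ on exactly $r$ vertices (this exists since $t > r$ and $T$ is connected) and order $V(T) = \{v_1, \dots, v_t\}$ so that $\{v_1, \dots, v_r\} = V(T^*)$ and each $v_i$ with $i > r$ has a unique neighbor $v_{p(i)}$ among $\{v_1, \dots, v_{i-1}\}$; such an ordering exists by running BFS on $T^*$ first and then BFS through each component of $T - T^*$. For each copy $C$ of $K_r$ in $G'$ (at least $kn/2$ choices) and each bijection $\phi^*\colon V(T^*) \to V(C)$ ($r!$ choices, each automatically an embedding of $T^*$ because $C$ is complete), greedily extend: for $i = r+1, \dots, t$ in order, choose $\phi(v_i)$ to be a neighbor of $\phi(v_{p(i)})$ in $G'$ distinct from the earlier images, which by the minimum degree bound gives at least $d^* - (t-1) \geq d^*/2$ choices provided $k \geq k_0(T)$ is large enough that $d^* \geq 2(t-1)$. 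Multiplying the three counts gives $\Om_T\!\left(kn \cdot (k^{1/(r-1)})^{t-r}\right) = \Om_T\!\left(k^{(t-1)/(r-1)} n\right)$ labeled embeddings of $T$ in $G$, matching the target since $1 + (t-r)/(r-1) = (t-1)/(r-1)$.

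The crux --- and why bounds of Sidorenko type in terms of edge density alone are too weak here (consider $G$ equal to a single $K_s$ plus isolates: low average degree but many $K_r$'s and $T$'s) --- is the choice of seed. Although $T$ itself contains no copy of $K_r$, any $r$-vertex connected subtree of $T$ embeds freely into every $K_r$ of $G$ because $K_r$ is complete, so the $K_r$-copies act as starts and the min-degree extraction supplies enough local degree for greedy completions. The only calculation to verify carefully is the exponent, and it works out exactly.
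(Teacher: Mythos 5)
Your proof is correct, but it takes a genuinely different route from the paper's. The paper passes to the $r$-uniform hypergraph of $K_r$-copies and invokes the refined minimum-degree lemma of \cite{mckinley2023random} (\Cref{lem:minDegree}) with the tuned exponent $b=\frac{v(T)-1}{v(T)-r}$; this is needed because the paper seeds its greedy embedding at the \emph{vertices} of the resulting subgraph $G'$, so it must trade off $v(G')$ against the per-vertex clique count $\ell$ to make $v(G')\cdot \ell^{(v(T)-1)/(r-1)}$ come out to $\Om(k^{(v(T)-1)/(r-1)}n)$ uniformly in $v(G')$. You avoid that tension entirely by two changes: (i) a weighted deletion argument in which a vertex of degree below $d^*\approx k^{1/(r-1)}$ kills at most $\binom{d^*-1}{r-1}\le k/2$ copies of $K_r$, so the surviving subgraph $G'$ simultaneously has minimum degree $d^*$ and retains at least $kn/2$ copies of $K_r$ (in particular it is nonempty, which is the only role the size of $G'$ plays for you); and (ii) seeding the greedy embedding at the $\ge kn/2$ copies of $K_r$ themselves, via an $r$-vertex subtree $T^*\sub T$ embedded into the clique, so the factor $kn$ is obtained directly and only the remaining $t-r$ vertices draw on the minimum degree, giving $kn\cdot(d^*)^{t-r}=\Om(k^{(t-1)/(r-1)}n)$ as required. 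Your double-counting is also sound: a labeled embedding determines its seed clique $\phi(V(T^*))$ and all extension choices, so each embedding is produced once, and each copy of $T$ corresponds to $O_T(1)$ labeled embeddings. The upshot is a more elementary, self-contained proof that does not need the imported hypergraph lemma; the paper's approach is the more general-purpose tool (the same lemma reappears in spirit elsewhere in such arguments), but for this proposition your argument is cleaner.
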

	Alternatively, \Cref{thm:randomCliques} can be used in \Cref{prop:trees} instead of the disjoint union of cliques to get the same bound.  Sharper bounds for $F=K_{1,s}$ were obtained by Cutler, Nir and Radcliffe~\cite[Theorem 1.9]{cutler2022supersaturation} when $\N(K_r,G)=(1+\ep) \ex(n,K_{1,s},K_r)$.  This and the Kruskal-Katona theorem are the only results we are aware of studying degenerate clique supersaturation problems prior to this work.

	We next look at complete bipartite graphs $K_{s,t}$.  This is a natural case to study given that \Cref{prop:ESKst} is relatively easy to prove and essentially solves the case of $r=2$.  However, this problem becomes significantly more complex for $r>2$, even in the simplest (non-tree) case of $s=2$. 

	\begin{thm}\label{thm:graphK2t}
		For all integers $t\ge 2$ and $2< r<2+t$, there exists a constant $k_0=k_0(t)$ such that if $G$ is an $n$-vertex graph with $\N(K_r,G)=k n^{3/2}$ and $k\ge k_0$, then
		\[\N(K_{2,t},G)=\Om(\min\{k^{\f{t}{r-2}}n^{3/2},k^{\f{2t^2}{(2t-1)(r-2)+t}}n^{\f{(3t-2)(r-2)+2t}{(2t-1)(r-2)+t}}\}).\]
		Moreover, if $2<r<2+t$, there exists an $n$ vertex graph $G$ with $\N(K_r,G)=\Om(kn^{3/2})$ and
		\[\N(K_{2,t},G)= O(k^{\f{2t^2}{(2t-1)(r-2)+t}}n^{\f{(3t-2)(r-2)+2t}{(2t-1)(r-2)+t}}).\]
		Also, conditional on the existence of a bipartite graph $B$ on $U\cup V$ with $|U|=k^{-2/(r-2)}n^{3/2},\ |V|=n,\ e(B)=\Om(k^{-1/(r-2)} n^{3/2})$, and such that every pair of vertices in $V$ has fewer than $t$ paths of length at most 4 between them; there exists an $n$ vertex graph $G$ with $\N(K_r,G)=\Om(kn^{3/2})$ and
		\[\N(K_{2,t},G)= O(\min\{k^{\f{t}{r-2}}n^{3/2},k^{\f{2t^2}{(2t-1)(r-2)+t}}n^{\f{(3t-2)(r-2)+2t}{(2t-1)(r-2)+t}}\}).\]
	\end{thm}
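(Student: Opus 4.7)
The theorem has three parts---a lower bound on $\N(K_{2,t},G)$, an unconditional construction matching the second bound in the min, and a conditional construction matching the full min---so the plan is to address them separately.

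For the lower bound, I would first write $\N(K_{2,t}, G) = \sum_{\{u,v\}} \binom{c(u,v)}{t}$, where $c(u,v) = |N(u) \cap N(v)|$ is the codegree, and relate $c(u,v)$ to $t_r(u,v) := \#\{K_r\text{ in } G \text{ containing } uv\}$ via Kruskal-Katona. Specifically, since $G[N(u) \cap N(v)]$ contains $t_r(u,v)$ copies of $K_{r-2}$, we get $c(u,v) = \Om(t_r(u,v)^{1/(r-2)})$, and hence $\N(K_{2,t},G) = \Om\big(\sum t_r(u,v)^{t/(r-2)}\big)$, summing over pairs with $c(u,v) \ge t$. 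Coupling this with the constraint $\sum t_r(u,v) = \binom{r}{2}\, k n^{3/2}$ and the fact that $t/(r-2) > 1$ (from $r \le t+1$), I would run a dyadic decomposition over the $t_r$-values: the contribution from pairs with moderate $t_r$ yields the second bound via a Hölder/moment estimate, while the contribution from pairs with very large $t_r$ yields the first bound $k^{t/(r-2)} n^{3/2}$ via Kruskal-Katona applied to a single high-codegree pair. The hard part will be identifying the correct pivot between the two regimes and calibrating the Hölder exponents so that the two case-bounds interpolate smoothly to give a clean min.

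For the unconditional upper bound, I would apply \Cref{thm:randomCliques} directly with $F = K_{2,t}$ and $N = k n^{3/2}$. One first checks that $K_{2,t}$ is 2-balanced (among subgraphs $F' \subseteq K_{2,t}$ with $v(F') \ge 3$, the density $(e(F')-1)/(v(F')-2)$ is easily seen to be maximized at $F' = K_{2,t}$, where it equals $(2t-1)/t$), and computes $\beta_r(K_{2,t}) = t/D$ with $D = (r-2)(2t-1) + t$. The resulting bound $O\big((kn^{3/2 - r})^{2t \cdot t/D} n^{t+2}\big)$ simplifies, after tracking the $n$-exponent, to exactly $O(k^{2t^2/D} n^{((3t-2)(r-2)+2t)/D})$.

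For the conditional construction matching the full minimum, I would present two separate constructions and retain whichever gives the smaller $K_{2,t}$-count. The first construction is that of \Cref{thm:randomCliques} (which already achieves the second bound). The second uses the hypothesized bipartite graph $B$: define $G$ on vertex set $V$ by declaring each $N_B(u) \subseteq V$ to be a clique in $G$. If $B$ is nearly-regular on the $U$-side with degree $d = \Theta(k^{1/(r-2)})$ (which is consistent with the given sizes of $|U|$ and $e(B)$), then the single-clique contributions give $\N(K_r, G) = \Theta(|U| \binom{d}{r}) = \Theta(kn^{3/2})$ and $\N(K_{2,t}, G) = \Theta(|U| d^{t+2}) = \Theta(k^{t/(r-2)} n^{3/2})$. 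The contributions to $\N(K_{2,t}, G)$ from pairs $v_1, v_2 \in V$ whose common $G$-neighborhoods span multiple cliques are controlled by the path condition on $B$: simple paths of length $2$ from $v_1$ to $v_2$ bound the single-clique contribution to $|N_G(v_1) \cap N_G(v_2)|$, and simple paths of length $4$ bound the cross-clique contribution, together producing a matching $O(k^{t/(r-2)} n^{3/2})$ bound on these terms. The main obstacle here is the combinatorial accounting needed to translate the length-$\le 4$ path condition cleanly into a bound on $|N_G(v_1) \cap N_G(v_2)|$ and then into a bound on $\sum_{v_1,v_2 \in V} \binom{|N_G(v_1) \cap N_G(v_2)|}{t}$.
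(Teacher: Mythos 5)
Your second and third parts track the paper's argument: the unconditional construction is exactly an application of \Cref{thm:randomCliques} with $F=K_{2,t}$ (your computation of $\beta_r(K_{2,t})=t/D$ and the resulting exponents is correct), and the conditional construction is the paper's clique graph $K(B)$, with the cross-clique $K_{2,t}$'s killed by the length-$\le 4$ path condition as in \Cref{lem:K2lcount}. One caveat there: you assume $B$ is nearly regular on the $U$-side, but the hypothesis only gives $|U|$, $|V|$ and $e(B)$; the paper has to earn the degree bound by deleting the high-degree $U$-vertices and using the upper bound $\ex(m,n,\c{P}_{\le 4}^{\ell+1})=O(\ell^{1/4}n^{3/4}m^{1/2}+m+n)$ to show these carry at most half the edges. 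That truncation step is genuinely needed, not just "accounting."

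The real gap is in the lower bound. Your only handle on codegrees is $c(u,v)\ge t_r(u,v)^{1/(r-2)}$, and your only constraint is $\sum_e t_r(e)=\binom{r}{2}kn^{3/2}$; no dyadic decomposition or H\"older inequality on these data alone can produce the second term of the min. Concretely, take $G$ to be a union of $kn^{3/2}$ copies of $K_r$ in which every edge lies in $O(1)$ of them (e.g.\ for $r=3$ a partial Steiner triple system; such graphs exist whenever $kn^{3/2}\ll n^2$). Then $t_r(u,v)^{1/(r-2)}=O(1)<t$ for every pair, so your sum $\sum\binom{t_r(u,v)^{1/(r-2)}}{t}$ is identically zero, yet the theorem asserts a large positive lower bound — which such a graph does satisfy, but only because it has $\Theta(kn^{3/2})$ \emph{edges} and hence average codegree $\Theta(k^2)$ by cherry counting. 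The paper's proof is built around exactly this tension: it splits on $e(G)$, using \Cref{prop:ESKst} (the edge-based supersaturation $\N(K_{2,t},G)=\Omega(e(G)^{2t}n^{2-3t})$, which \emph{improves} as $e(G)$ grows) in the dense case, and the convexity bound $e(G)\cdot\bigl(kn^{3/2}/e(G)\bigr)^{t/(r-2)}$ over the clique-degrees of edges (which \emph{degrades} as $e(G)$ grows) in the sparse case; the second term of the min is precisely the value at the crossover $e(G)\approx k^{t/D}n^{((6t-4)(r-2)+3t)/(2D)}$. Your sparse-case convexity estimate is essentially the paper's Claim 2.4, but without introducing $e(G)$ and the dense-case input from \Cref{prop:ESKst}, the argument cannot close.
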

	The bipartite graph $B$ in the conditional portion of this theorem is closely related to a bipartite analogue of the infamous problem of determining $\ex(n,C_8)$; see the concluding remarks for more on this.

	One might hope that the methods of \Cref{thm:graphK2t} could be extended to $K_{s,t}$ in general, but there are fundamental obstacles to this.  Indeed, every construction in this paper will turn out to be the union of nearly-disjoint cliques of roughly the same size, and one can essentially show\footnote{Of course, every graph $G$ is the disjoint union of $K_2$'s, so this exact claim is not quite true.  However, \Cref{lem:KstFail} will imply that this claim is true if we further impose that a large portion of the $K_r$'s in $G$ lie within the cliques used in its union.} that any construction of this form will fail to do better than the random graph $G_{n,p}$ for $K_{s,t}$ when $r\le s$; see \Cref{lem:KstFail} for an exact statement and the concluding remarks for further discussions.

	\section{Supersaturation}
	In this section we prove our supersaturation results, i.e.\ the lower bounds of \Cref{prop:trees} and \Cref{thm:graphK2t}.  We begin with two preliminary results that will be useful in our proofs.  First, the following shows that if a  set of vertices is contained in many copies of $K_r$, then they must have many neighbors. 
	\begin{lem}\label{lem:commonNeighbors}
		If $G$ is a graph and $S\sub V(G)$ is a set of vertices which is contained in at least $\ell$ cliques of size $r>|S|$, then there are at least $\ell^{1/(r-|S|)}$ vertices adjacent to every vertex of $S$.
	\end{lem}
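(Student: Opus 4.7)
The plan is to identify the common neighborhood $N = \bigcap_{v \in S} N_G(v)$ and count the $K_r$'s containing $S$ in terms of $|N|$. Any clique of size $r$ containing $S$ is determined by its remaining $r - |S|$ vertices, and each such vertex must be adjacent to every vertex of $S$, hence must lie in $N$. Moreover, those $r - |S|$ additional vertices must themselves form a clique in $G[N]$.

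Therefore the number of $K_r$'s containing $S$ is at most the number of $(r - |S|)$-cliques in $G[N]$, which is trivially bounded by
\[\binom{|N|}{r - |S|} \le |N|^{r - |S|}.\]
By hypothesis this quantity is at least $\ell$, so $|N|^{r - |S|} \ge \ell$, which rearranges to $|N| \ge \ell^{1/(r - |S|)}$ as desired.

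There is no real obstacle here; the whole statement is essentially a one-line counting argument, and the only thing to be careful about is the direction of the inequality (upper-bounding the number of $K_r$'s through $S$ by a function of $|N|$, not the reverse) and the requirement $r > |S|$ which ensures the exponent $r - |S|$ is positive so that the final rearrangement is valid.
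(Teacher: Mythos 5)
Your argument is correct and is essentially identical to the paper's: both bound the number of $K_r$'s through $S$ by $\binom{|N|}{r-|S|}\le |N|^{r-|S|}$, where $N$ is the common neighborhood of $S$, and then invert the inequality. Nothing further is needed.
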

	\begin{proof}
		Let $N(S)$ denote the set of vertices adjacent to every vertex of $S$.  Observe that every $K_r$ containing $S$ can be identified by choosing $r-|S|$ vertices from $N(S)$.  Thus we must have \[\ell\le {|N(S)|\choose r-|S|}\le |N(S)|^{r-|S|},\]
		from which the result follows.
	\end{proof}
	
	To motivate our next lemma, we note that in proving \Cref{prop:trees}, it would be useful to work with a subgraph $G'\sub G$ which has large minimum degree. Unfortunately, the standard lemma saying that we can find a $G'\sub G$ of minimum degree comparable to the average degree of $G$ will be too weak for our purposes, as it could be the case that $G'$ has very few vertices (in which case we may not be able to find many copies of $T$ in $G'$).  We get around this with the following lemma from \cite[Lemma 2.5]{mckinley2023random}, which gives substantially stronger bounds on $\del(G')$ if $v(G')$ is small.  We will in fact need a slight generalization of this result to hypergraphs, which can be proven with an identical argument.
	
	\begin{lem}[\cite{mckinley2023random}]\label{lem:minDegree}
		Let $H$ be an $n$-vertex hypergraph with $\emptyset\notin E(H)$. For all real $b\ge 1$, there exists a subgraph $H'\sub H$ with $v(H')>0$ and minimum degree at least \[2^{-b}\left(\frac{v(H')}{n}\right)^{1/b}\f{e(H)}{ v(H')}.\]
	\end{lem}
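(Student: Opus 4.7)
The plan is a greedy vertex-deletion argument, analogous to the standard derivation of a subgraph of minimum degree at least half the average degree, but with a vertex-count-dependent threshold. Writing $m := e(H)$, I would set
\[\tau(v') := 2^{-b}\left(\frac{v'}{n}\right)^{1/b}\frac{m}{v'}\]
and iteratively delete any vertex whose current degree is less than $\tau$ evaluated at the current vertex count, halting as soon as no such vertex exists. By definition, if the process halts at a nonempty hypergraph $H'$, then $H'$ has minimum degree at least $\tau(v(H'))$, which is exactly the bound claimed.

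The remaining task is to show that the process halts at some $H'$ with $v(H') > 0$, and this reduces to bounding the total number of edges lost during deletion. At a step with current vertex count $v'$, a single deletion removes at most $\tau(v')$ edges, so the total loss is at most $\sum_{v'=1}^{n} \tau(v')$. Pulling out the constant $m/(2^b n^{1/b})$ and bounding the remaining sum $\sum_{v'=1}^{n} v'^{1/b-1}$ by the integral $\int_0^n x^{1/b-1}\, dx = bn^{1/b}$ (valid because $x^{1/b-1}$ is nonincreasing for $b \ge 1$) gives a total edge loss of at most $bm/2^b$, and a routine check confirms that $b/2^b < 1$ for every $b \ge 1$, the maximum $1/(e\ln 2)$ being attained at $b = 1/\ln 2$.

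Since strictly fewer than $m = e(H)$ edges are removed, and the hypothesis $\emptyset \notin E(H)$ guarantees that deleting all $n$ vertices would delete all $m$ edges, the process necessarily halts with $v(H') > 0$, producing the required subhypergraph. The only delicate point—and really the whole content of the lemma—is the design of the threshold $\tau(v')$: it must be just large enough that its value at termination matches the claimed lower bound on $\delta(H')$, yet the sum $\sum_{v'=1}^{n}\tau(v')$ must remain below $m$. The factor $(v'/n)^{1/b}$ is exactly what interpolates between these two requirements, and the case $b = 1$ recovers the classical minimum-degree lemma.
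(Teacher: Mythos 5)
Your proof is correct and is precisely the greedy vertex-deletion argument with a vertex-count-dependent threshold that the paper defers to (it cites \cite{mckinley2023random} and notes the hypergraph version follows by an identical argument): the threshold $\tau(v')$ is calibrated so that the integral comparison gives total edge loss at most $b\,2^{-b}e(H)<e(H)$, and $\emptyset\notin E(H)$ ensures full deletion would destroy all edges, so the process must halt at a nonempty $H'$. All steps check out, including the bound $\sum_{v'\le n}(v')^{1/b-1}\le bn^{1/b}$ and the maximization of $b2^{-b}$.
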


	With this we can prove our supersaturation result for trees.
	
	\begin{proof}[Proof of \Cref{prop:trees}]
		Recall that we wish to show for all trees $T$ and $2\le r<v(T)$, if $T$ is a tree and $G$ is an $n$-vertex graph with $kn$ copies of $K_r$, then $G$ contains at least $\Om(k^{(v(T)-1)/(r-1)})$ copies of $T$ provided $k$ is sufficiently large in terms of $T$.  
		
		Let $H$ by the $r$-uniform hypergraph with $V(H)=V(G)$ whose hyperedges are copies of $K_r$ in $G$.  Let $H'\sub H$ be the subhypergraph guaranteed by \Cref{lem:minDegree} with $b=\frac{v(T)-1}{v(T)-r}>1$, and let $G'\sub G$ by the induced subgraph with $V(G')=V(H')$.  By unwinding the definitions, we see that every vertex of $G'$ is contained in at least
		\[\ell:=2^{-\frac{v(T)-1}{v(T)-r}}\left(\frac{v(G')}{n}\right)^{(v(T)-r)/(v(T)-1)}\f{kn}{ v(G')}=2^{-\frac{v(T)-1}{v(T)-r}} k n^{\f{r-1}{v(T)-1}} v(G')^{\f{1-r}{v(T)-1}}\]
		copies of $K_r$, which by \Cref{lem:commonNeighbors} implies every vertex of $G'$ has degree at least $\ell^{1/(r-1)}$.  Observe that $\ell\ge 2^{-\frac{v(T)-1}{v(T)-r}}k$, so by taking $k$ sufficiently large, we may assume $\ell^{1/(r-1)} \ge 2v(T)$.  
		
		With this in mind, we claim that
		\[\N(T,G')\ge v(G')\cdot (\ell^{1/(r-1)}-v(T))^{v(T)-1}/v(T)!.\]
		Indeed, because $T$ is a tree, we can order its vertices $x_1,\ldots,x_{v(T)}$ in such a way that every $x_i$ with $i>1$ has a (unique) neighbor $x_j$ with $j<i$.  With this ordering, we build our copies of $T$ greedily by selecting any vertex of $G'$ and label it $y_1$, and then iteratively given that we have chosen $y_1,\ldots,y_{i-1}$ and that $x_i$ is adjacent to $x_j$ with $j<i$, we choose $y_i$ to be any neighbor of $y_j$ that is not equal to any of the already selected vertices $y_1,\ldots,y_{i-1}$.  It is not difficult to check that this procedure terminates with a set of vertices $y_1,\ldots,y_{v(T)}$ which forms a copy of $T$ in $G'\sub G$, and that the number of ways of going through this procedure is at least $v(G')\cdot (\ell^{1/(r-1)}-v(T))^{v(T)-1}$. The same tree can be generated at most $v(T)!$ times by this algorithm, giving the bound above.
		
		Using $\ell^{1/(r-1)}\ge 2 v(T)$ and the definition of $\ell$, we obtain
		\[\N(T,G)\ge \N(T,G')=\Om(v(G')\ell^{(v(T)-1)/(r-1)})=\Om( k^{(v(T)-1)/(r-1)}n),\]
		giving the desired result.
	\end{proof}
	We next prove our supersaturation result for $K_{2,t}$.
	\begin{proof}[Proof of \Cref{thm:graphK2t} Lower Bound]
		Recall that we wish to show that for all $t\ge 2$ and $2\le r<2+t$ that if $G$ is an $n$-vertex graph with $kn^{3/2}$ copies of $K_r$, then $G$ contains at least \[\Om(\min\{k^{\f{t}{r-2}}n^{3/2},k^{\f{2t^2}{(2t-1)(r-2)+t}}n^{\f{(3t-2)(r-2)+2t}{(2t-1)(r-2)+t}}\})\] copies of $K_{2,t}$ provided $k$ is sufficiently large in terms of $t$.  
		
		The idea behind our proof is the following: either our graph $G$ has many edges (in which case it will contain many copies of $K_{2,t}$ by \Cref{prop:ESKst}), or we can assume every pair of adjacent vertices has many common neighbors (in which case we can build copies of $K_{2,t}$ greedily).  More precisely, \Cref{prop:ESKst} implies there exists a constant $C=C(t)$ such that
		\begin{equation}\textrm{If }e(G)\ge Cn^{3/2}\textrm{, then } \N(K_{2,t},G)=\Om(e(G)^{2t} n^{2-3t}).\label{eq:graphDense}\end{equation}
		We use the following when $e(G)$ is small.
		\begin{claim}
			If  $e(G)\le \max\{C,k^{1/2}\} n^{3/2}$ and $k$ is sufficiently large in terms of $C$, then 
			\begin{equation}\label{eq:expandTriangles}\N(K_{2,t},G)=\Omega\left(k^{\f{t}{r-2}}n^{\f{3t}{2(r-2)}}e(G)^{\f{r-2-t}{r-2}}\right).\end{equation}
		\end{claim}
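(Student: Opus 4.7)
The plan is to count copies of $K_{2,t}$ through their $2$-side and to relate this count to $\N(K_r,G)$ via the sizes of common neighborhoods of edges. Write $c(e)$ for the number of common neighbors of the endpoints of an edge $e$, and set $m=e(G)$. Since each copy of $K_r$ contains $\binom{r}{2}$ edges and the remaining $r-2$ vertices must lie in the common neighborhood of each such edge, a standard double count gives
\[\binom{r}{2}\N(K_r,G)\le \sum_{e\in E(G)}\binom{c(e)}{r-2},\]
so $\sum_e c(e)^{r-2}=\Om(kn^{3/2})$.

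Next I would lift this to a lower bound on $\sum_e c(e)^t$. Because the hypothesis $r<2+t$ gives $t/(r-2)>1$, the power-mean inequality (equivalently, Jensen applied to the convex map $x\mapsto x^{t/(r-2)}$) applied to the $m$ values $\{c(e)\}_{e\in E(G)}$ yields
\[\sum_e c(e)^t \ge m\l(\f{1}{m}\sum_e c(e)^{r-2}\r)^{t/(r-2)}=\Om\l(m^{1-t/(r-2)}(kn^{3/2})^{t/(r-2)}\r).\]
On the other hand, each edge $e$ with $c(e)\ge 2t$ is the $2$-side of at least $\binom{c(e)}{t}=\Om(c(e)^t)$ copies of $K_{2,t}$, and since different edges produce different $K_{2,t}$'s (up to the usual factor of $2$ when $t=2$),
\[\N(K_{2,t},G)=\Om\l(\sum_{e:\,c(e)\ge 2t}c(e)^t\r).\]

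The last step is to show that the contribution to $\sum_e c(e)^t$ from edges with $c(e)<2t$ is negligible, so that one may replace the restricted sum by the full sum. This contribution is at most $m(2t)^t$, which is dominated by $m^{1-t/(r-2)}(kn^{3/2})^{t/(r-2)}$ precisely when $m\ll kn^{3/2}$. This is exactly what the hypothesis $e(G)\le \max\{C,k^{1/2}\}n^{3/2}$ provides once $k$ is large compared with $C$ and $t$, since then $\max\{C,k^{1/2}\}n^{3/2}=o(kn^{3/2})$. Substituting in the lower bound on $\sum_e c(e)^t$ produces
\[\N(K_{2,t},G)=\Om\l(k^{t/(r-2)}\,n^{3t/(2(r-2))}\,m^{(r-2-t)/(r-2)}\r),\]
which is exactly \eqref{eq:expandTriangles}. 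The main subtlety to watch is that the exponent $1-t/(r-2)$ is \emph{negative}, so the power-mean step yields something useful only in the presence of an upper bound on $m$; this is precisely the role played by the hypothesis $e(G)\le\max\{C,k^{1/2}\}n^{3/2}$, and the rest is routine convexity and double counting.
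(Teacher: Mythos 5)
Your proposal is correct and follows essentially the same route as the paper: both double-count copies of $K_r$ over their edges to control edge codegrees (you bound $\sum_e c(e)^{r-2}$ directly, the paper bounds $\deg(e)$ and invokes its Lemma 2.1 to get $c(e)\ge\deg(e)^{1/(r-2)}$), then apply convexity/power-mean with exponent $t/(r-2)>1$ and use the hypothesis $e(G)\le\max\{C,k^{1/2}\}n^{3/2}$ to make the average codegree large enough that the low-codegree (or additive $-1$) error terms are negligible. The only differences are cosmetic: you discard edges with $c(e)<2t$, whereas the paper absorbs the corresponding $-1$ terms by noting the average $\ell\ge 2t^t$.
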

		\begin{proof}
			We form copies of $K_{2,t}$ by starting with an edge $e=uv$ and then choosing any $t$ of the common neighbors of $u,v$. Note that this process generates each $K_{2,t}$ in at most 2 ways (and in at most 1 way if $t>2$).
			
			To estimate the number of copies of $K_{2,t}$ formed in this way, let $\deg(e)$ denote the number of $K_r$'s containing the edge $e$.  By \Cref{lem:commonNeighbors}, the two vertices of $e$ have at least $\deg(e)^{1/(r-2)}$ common neighbors.  Thus
			\[\N(K_{2,t},G)\ge \half \sum_{e\in E(G)} {\deg(e)^{1/(r-2)}\choose t}\ge \half\sum_{e\in E(G)}\left(\frac{\deg(e)^{t/(r-2)}}{t^t}-1\right),\]
			where this last step used the inequality ${x\choose t}\ge x^t/t^t-1$ valid for all $x$.  
			
			Since $t>r-2$ by hypothesis, the function $x^{t/(r-2)}/t^t-1$ is convex, and hence the expression above is minimized when each $\deg(e)$ is equal to the average value
			\[\ell:={r\choose 2} k n^{3/2} e(G)^{-1},\]
			so we find
			\[\N(K_{2,t},G)\ge \half e(G)\left(\frac{\ell^{t/(r-2)}}{t^t}-1\right).\]
			Note that if $e(G)\le \max(C,k^{1/2})n^{3/2}$ then $\ell\ge 2t^t$ for $k$ sufficiently large, meaning $\frac{\ell^{t/(r-2)}}{t^t}-1=\Omega\left(\ell^{t/(r-2)}\right)$.  In total then, we find
			\[\N(K_{2,t},G)=\Om(e(G) \ell^{t/(r-2)})=\Omega\left(k^{\f{t}{r-2}}n^{\f{3t}{2(r-2)}}e(G)^{\f{r-2-t}{r-2}}\right)\]
			as desired.
		\end{proof}
		We now split up our analysis based off of the value of $k$.  Recalling the value of $C$ defined before \eqref{eq:graphDense}, we first consider the case that $k\le C^{\f{(2t-1)(r-2)+t}{t}} n^{\f{r-2}{2t}}$.  If  $e(G)\ge Cn^{3/2}$, then by \eqref{eq:graphDense} we have \[\N(K_{2,t},G)=\Om(n^2)=\Om\left(k^{\f{t}{r-2}}n^{3/2}\right),\] with the last step using our assumption on $k$, proving the result.  If instead $e(G)\le C n^{3/2}$, then \eqref{eq:expandTriangles} gives a lower bound of $\Om(k^{\f{t}{r-2}}n^{\f{3}{2}})$ as desired.
		
		From now on we assume $k\ge  C^{\f{(2t-1)(r-2)+t}{t}} n^{\frac{r-2}{2t}}$.  First  consider the case \[e(G)\ge k^{\f{t}{(2t-1)(r-2)+t}}n^{\f{(6t-4)(r-2)+3t}{(4t-2)(r-2)+2t}}\ge C n^{3/2},\] where this last inequality holds by our assumption on $k$.  By \eqref{eq:graphDense} we find
		\[\N(K_{2,t},G)=\Om\left( k^{\f{2t^2}{(2t-1)(r-2)+t}}n^{\f{(6t^2-4t)(r-2)+3t^2}{(2t-1)(r-2)+t}}\cdot n^{2-3t}\right)=\Om\left(k^{\f{2t^2}{(2t-1)(r-2)+t}}n^{\f{(3t-2)(r-2)+2t}{(2t-1)(r-2)+t}}\right),\]
		giving the desired bound.  If instead $e(G)\le k^{\f{t}{(2t-1)(r-2)+t}}n^{\f{(6t-4)(r-2)+3t}{(4t-2)(r-2)+2t}}\le k^{1/2}n^{3/2}$, then by \eqref{eq:expandTriangles} we have
		\begin{align*}
			\N(K_{2,t},G) & =\Om\left(\left[k^{t}n^{\f{3t}{2}}\cdot k^{\f{(r-2-t)t}{(2t-1)(r-2)+t}} n^{\f{(r-2-t)((6t-4)(r-2)+3t)}{(4t-2)(r-2)+2t}}\right]^{1/(r-2)}\right) \\ & =\Om\left(\left[k^{\f{2t^2(r-2)}{(2t-1)(r-2)+t}}n^{\f{((6t-4)(r-2)+4t)(r-2)}{(4t-2)(r-2)+2t}}\right]^{1/(r-2)}\right) \\ & =\Om\left(k^{\f{2t^2}{(2t-1)(r-2)+t}}n^{\f{(3t-2)(r-2)+2t}{(2t-1)(r-2)+t}}\right),
		\end{align*}
		again giving the desired bound.
	\end{proof}

	\section{Constructions}
	
	In this section, we construct graphs $G$ with many $K_r$'s but few copies of $K_{2,t}$.  Motivated by \Cref{prop:trees} and \Cref{lem:Kruskal} whose extremal constructions were unions of cliques, it is perhaps reasonable to consider $G$ which are union of cliques.  And indeed, our constructions will consist of two different $G$ of this form: one coming from the union of random cliques, the other from the union of cliques which avoids certain structures.
	
	\subsection{Uniform Random Cliques}\label{sub:uniform}
	In this subsection we prove \Cref{thm:randomCliques} by constructing graphs $G$ which contain many copies of $K_r$ and few copies of $F$ when $F$ is 2-balanced.  Intuitively, the graph $G$ will be formed by taking the union of roughly $u$ cliques of size $m$ chosen uniformly at random.  
	
	More precisely, given a real number $u$ and integers $m,n$, define the random clique graph $G_{u,m,n}$ as follows.  Let $C_1,\ldots,C_{n\choose m}$ be an enumeration of the $m$-element subsets of $[n]$ and let $B_1,\ldots,B_{n\choose m}$ be i.i.d.\ Bernoulli random variables with probability of success $ u {n\choose m}^{-1}$.  Define $G_{u,m,n}$ to be the graph on $[n]$ with edge set 
	\begin{equation}\label{union}
		\bigcup_{i:B_i=1} {C_i\choose 2}.
	\end{equation}
	
	We will prove two properties about the random clique graph $G_{u,m,n}$: that it contains a relatively large number of $K_r$'s, and that it contains few copies of other $F$ (provided $u$ and $m$ are chosen appropriately).  We begin with the clique estimate.

	
	\begin{lem}\label{lem:Krcount}
		For all $r\ge 2$, there exists $\delta=\delta(r)>0$ such that if $u\ge 1,m\ge 2r$ and $u m^r\le n^r$, then \[\Pr(\N(K_r,G_{u,m,n})>\delta u m^r)>\delta.\]
	\end{lem}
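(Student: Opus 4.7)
The plan is to lower-bound $\N(K_r, G_{u,m,n})$ by the random variable
\[W := \l|\l\{S \in \binom{[n]}{r} : S \subset C_i \tr{ for some } i \tr{ with } B_i = 1\r\}\r|,\]
counting $r$-subsets covered by at least one chosen clique. Since every such $S$ spans a copy of $K_r$ in $G_{u,m,n}$, we have $\N(K_r, G_{u,m,n}) \ge W$, so it suffices to show $\Pr[W \ge \delta u m^r] \ge \delta$. I would do this via the second moment method (Paley--Zygmund).

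First I would compute $\E[W]$. For each $r$-set $S$, the quantity $N_S := \sum_{i : S \subset C_i} B_i$ is Binomial with mean $\mu := p \binom{n-r}{m-r} = u \cdot m_{(r)}/n_{(r)}$, where $x_{(r)} = x(x-1)\cdots(x-r+1)$. The hypotheses $um^r \le n^r$ and $n \ge m \ge 2r$ yield $\mu \le 2^r$, so $\Pr[N_S \ge 1] \ge 1 - e^{-\mu} \ge c_r \mu$ for some constant $c_r > 0$ depending only on $r$. Summing gives $\E[W] = \binom{n}{r} \Pr[N_S \ge 1] = \Om_r(u m^r)$.

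Next I would bound $\E[W^2]$ via covariances. For $r$-sets $S \ne T$, let $A_S := \{i : S \subset C_i\}$, $q_1 := \Pr[\exists i \in A_S \setminus A_T, B_i = 1]$, and $q_3 := \Pr[\exists i \in A_S \cap A_T, B_i = 1]$. Because the relevant Bernoullis are disjoint, a short algebraic calculation gives
\[\mathrm{Cov}(\1[N_S \ge 1], \1[N_T \ge 1]) = q_3(1-q_3)(1-q_1)^2 \le q_3 \le p \binom{n-|S \cup T|}{m - |S \cup T|}.\]
Grouping pairs $(S,T)$ by $k := |S \cup T| \in \{r+1, \ldots, 2r\}$ and applying the identity $\binom{n}{r}\binom{n-r}{k-r}\binom{n-k}{m-k} = \binom{n}{m}\binom{k}{r}\binom{m}{k}$, one obtains $\sum_{S \ne T} \mathrm{Cov} \le C_r u m^{2r}$, with the dominant contribution coming from disjoint pairs ($k = 2r$).

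Finally, $\Var[W] \le \E[W] + C_r u m^{2r}$, and since $u \ge 1$ ensures $\E[W]^2 = \Om_r(u^2 m^{2r})$ which is at least a constant times $u m^{2r}$, the ratio $\E[W^2]/\E[W]^2 = O_r(1)$. Paley--Zygmund then yields $\Pr[W \ge \E[W]/2] \ge \E[W]^2/(4\E[W^2]) \ge \delta$ for some $\delta = \delta(r) > 0$; after adjusting $\delta$ this gives the desired conclusion. The main subtlety lies in the second-moment bound in the regime where $u$ is close to $1$: the hypothesis $um^r \le n^r$ is precisely what keeps $\mu = O_r(1)$ and prevents the disjoint-pair contribution from overwhelming $\E[W]^2$.
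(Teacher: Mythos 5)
Your proof is correct, and it takes a genuinely different route from the paper's. You run a second-moment (Paley--Zygmund) argument on $W$, the number of distinct $r$-sets covered by at least one selected clique: the first moment is $\Om_r(um^r)$ because the hypotheses force the coverage multiplicity $\mu = u\,m_{(r)}/n_{(r)}$ to be $O_r(1)$ while $u\ge 1$ keeps it bounded below, and the covariance sum $O_r(um^{2r})$ is dominated by $\E[W]^2=\Om_r(u^2m^{2r})$ again thanks to $u\ge 1$. The paper instead works with the clique count \emph{with multiplicity} $X=\binom{m}{r}|\{i:B_i=1\}|$, which is large with probability $\ge 1/2$ by a one-line binomial argument, and then controls the over-counting by a dyadic decomposition of the multiplicities $\lambda(A)$ together with exponential tail bounds and Markov, showing that high-multiplicity $r$-sets contribute a negligible fraction of $X$. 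The paper's route avoids covariance computations entirely; yours avoids the dyadic bookkeeping and is the more standard concentration argument --- both correctly isolate $um^r\le n^r$ (multiplicities are $O(1)$ on average) and $u\ge1,\ m\ge2r$ (the mean is large enough) as the operative hypotheses. Two cosmetic remarks: your covariance identity should read $q_3(1-q_3)(1-q_1)(1-q_2)$ rather than $q_3(1-q_3)(1-q_1)^2$, though only the bound $\mathrm{Cov}\le q_3$ is used, so nothing breaks; and the step $\Var[W]\le\E[W]+C_rum^{2r}=O_r(\E[W]^2)$ silently uses $\E[W]=\Om_r(1)$, which follows from $m\ge 2r$ and is worth a half-sentence.
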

	\begin{proof}
		We record the following binomial tail bound that will be needed in the proof, see for example \cite[Theorem 2.1]{JLR}: if  $X \sim \text{Bin}(n,p)$, then
		\begin{equation}\Pr(X \geq np + t) \leq \exp[-t^2/(2np + 2t/3)].\label{binup}\end{equation}
		
		Returning to the main proof, let $X = \binom{m}{r}|\{i : B_i = 1\}|$ (which counts the number of $r$-cliques in $G$ \emph{with multiplicity}), so $X \sim \binom{m}{r}\text{Bin}({n\choose m},u {n\choose m}^{-1})$.  Using the general fact that $\Pr(\text{Bin}(M,p) \geq Mp/2) \geq 1/2$ if $Mp \geq 1$ (which follows from e.g.\ \cite{lord_2010}), together with $u\ge 1$ and $m\ge 2r$ gives
		\begin{equation}\label{Xlower}
			\Pr\left(X\ge \frac{um^r}{2^{r+1}r!} \right)\ge \Pr\left(X \geq \half u{m\choose r}\right) \geq \half .
		\end{equation}
		
		Similarly, if $\lambda(A) = |\{i : B_i = 1, A \subset C_i \}|$ counts the multiplicity of a fixed $r$-clique $A$, then $\lambda \sim \text{Bin}({n-r\choose m-r}, u{n\choose m}^{-1})$.  This random variable has expectation 
		\[u (m)_r/(n)_r \lesssim u m^r/n^r  =: \mu .\]
		and this is at most $1$ by hypothesis. Note that \eqref{binup} says that for $t\ge 6$, 
		\[\Pr(\lambda(A) \geq 1+t)\leq \exp[-t^2/(2+ 2t/3)] \leq \exp[-t].\] 
		
		Thus, if $Y_i := |\{A : 2^i \leq \lambda(A) < 2^{i+1}\}|$, we find that $\E[Y_i]\le {n\choose r} \mu \exp[-2^{i-1}]$ for, say $i\geq 10$. By Markov's inequality, we have 
		\begin{equation}\label{Ybound}
			\Pr\left(Y_i \geq \binom{n}{r}\mu \exp[-2^{i-2}] \right) < \exp[-2^{i-2}]\le \exp[-i]
		\end{equation}
		for large enough $i$. Letting $L$ be a large constant to be determined, we have
		\[X =\sum_A \lambda(A) < \sum_i 2^{i+1}Y_i \leq \sum_{i=1}^L 2^{i+1}Y_i + \sum_{i > L} 2^{i+1}Y_i.\]
		
		By \eqref{Ybound}, the last sum is at most $2\binom{n}{r}\mu\sum_{i > L+1} (2/e)^i \leq 4 (2e^{-1})^L um^r/r!$ with probability at least $1 - \sum_{i>L}\exp[-i]$. Thus, using \eqref{Xlower} and taking $L$ sufficiently large, we have with probability at least $1/3$ that 
		\[\f{um^r}{2^{r+2}r!} \leq \sum_{i=1}^L 2^{i+1}Y_i \leq 2^{L+1} \sum_{i} Y_i = 2^{L+1}\N(K_r, G),\]
		which completes the proof.    
	\end{proof}
	
	We next aim to prove the random clique construction contains few copies of $F$ for certain ranges of parameters.
	
	\begin{lem}\label{lem:exp}
		Let $F$ be a 2-balanced graph with $e(F)\ge 2$.  If $u,m,n$ with $m\le n$ are such that $um^2<n^2$ and $u(m/n)^{2-\frac{v(F)-2}{e(F)-1}}> 1$, then
		\[\E[\N(F,G_{u,m,n})]=O\left((u m^2 n^{-2})^{e(F)}n^{v(F)}\right ).\]
	\end{lem}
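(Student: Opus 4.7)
The plan is to write $\E[\N(F, G_{u,m,n})]$ as a sum over embeddings $\iota$ of $V(F)$ into $[n]$ of the probabilities that $\iota(F)$ sits inside $G_{u,m,n}$, and then to bound each such probability by a union bound over ways of assigning each edge of $\iota(F)$ to a selected clique that covers it. Writing $N = \binom{n}{m}$ and union-bounding over all assignments $\phi \colon E(F) \to [N]$ gives
\[\Pr[\iota(F) \subseteq G_{u,m,n}] \le \sum_{\phi} \Pr\bigl[B_{\phi(e)} = 1 \text{ and } \iota(e) \subseteq C_{\phi(e)} \text{ for every } e \in E(F)\bigr].\]
Grouping $\phi$ by the partition $\pi$ of $E(F)$ whose blocks are the level sets of $\phi$, summing over tuples of indices assigned to the blocks, and then dropping the distinctness constraint for an upper bound, yields
\[\Pr[\iota(F) \subseteq G_{u,m,n}] \le \sum_\pi \prod_{P \in \pi} u \cdot \frac{\binom{n - |V(P)|}{m - |V(P)|}}{\binom{n}{m}} \le \sum_\pi \prod_{P \in \pi} u (m/n)^{|V(P)|},\]
using $m \le n$ in the last step, where $V(P)$ denotes the set of endpoints of edges in $P$. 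Since the bound is independent of $\iota$ and there are $O(n^{v(F)})$ embeddings, the claim reduces to showing that $\sum_\pi \prod_{P \in \pi} u(m/n)^{|V(P)|} = O((um^2/n^2)^{e(F)})$.

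Because there are $O(1)$ partitions of $E(F)$, it suffices to bound each partition's contribution by a constant multiple of the contribution of the trivial partition $\pi_0$ into singletons, which is exactly $(um^2/n^2)^{e(F)}$. Forming the ratio of an arbitrary $\pi$ to $\pi_0$, only parts with $|P| \ge 2$ contribute, giving
\[\prod_{P \in \pi \colon |P| \ge 2} u^{1 - |P|} (m/n)^{|V(P)| - 2|P|}.\]
Each such $P$ corresponds to a subgraph of $F$ on $|V(P)| \ge 3$ vertices with no isolated vertices, so the 2-balanced hypothesis applied to this subgraph gives $(|V(P)| - 2)/(|P| - 1) \ge \alpha$, where $\alpha := (v(F) - 2)/(e(F) - 1)$. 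Rearranging the hypothesis $u(m/n)^{2-\alpha} > 1$ as $u^{-1} < (m/n)^{2 - \alpha}$ and raising to $|P| - 1 > 0$ gives $u^{1 - |P|} \le (m/n)^{(2-\alpha)(|P| - 1)}$, so each factor in the product is at most $(m/n)^{|V(P)| - 2 - \alpha(|P| - 1)}$, which has non-negative exponent by the 2-balanced inequality above. Since $m/n \le 1$, each factor is at most $1$, and the desired bound follows.

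The main obstacle is setting up the union bound carefully so that the probability factorises cleanly over partition blocks; this requires recognising that the natural parameter controlling the ``cost'' of grouping several edges into the same clique is precisely $|V(P)|$, the number of endpoints that must be forced into a single random $m$-set. The rest is an algebraic check in which the 2-balanced condition matches exactly the exponent arising from the hypothesis $u(m/n)^{2-\alpha} > 1$, strongly suggesting that this hypothesis was engineered so that the trivial partition is the dominant contribution.
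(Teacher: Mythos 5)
Your proof is correct. The overall architecture matches the paper's: fix a copy of $F$, union-bound over ways the selected cliques can cover its edges, group by combinatorial type, and show the type in which each edge gets its own clique dominates. But the details differ in two worthwhile ways. First, your ``types'' are partitions of $E(F)$ with block weight $u(m/n)^{|V(P)|}$, whereas the paper parametrises coverings by families $\c{A}$ of vertex subsets of $V(\tilde F)$ (each containing an edge, jointly covering $E(\tilde F)$) with weight $u^{|\c{A}|}(m/n)^{\sum|A|}$; the two bookkeeping schemes are equivalent for the purpose of the upper bound. Second, and more substantively, the paper identifies the maximizer by a two-step local perturbation: merge two sets meeting in $\ge 2$ vertices (this step uses the hypothesis $um^2<n^2$), then split any non-edge set into the edges of its induced subgraph (this step uses $u(m/n)^{2-\frac{v(F)-2}{e(F)-1}}>1$ and 2-balancedness). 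You instead compare every partition to the singleton partition in one shot, and your block-by-block ratio computation uses only $m\le n$, 2-balancedness applied to the subgraph $(V(P),P)$, and $u(m/n)^{2-\alpha}>1$ raised to the power $|P|-1$ --- so you never invoke $um^2<n^2$. That hypothesis is genuinely unnecessary for this lemma under your argument (it is still used elsewhere in the paper, e.g.\ to keep the cliques near edge-disjoint for the $K_r$ count), which makes your route slightly cleaner and marginally more general. All the individual steps check out: two distinct edges force $|V(P)|\ge 3$, the sum over partitions of $E(F)$ has $O_F(1)$ terms, and dropping the distinctness constraint on the indices only increases the nonnegative sum.
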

	The condition $um^2<n^2$ intuitively means the cliques of $G_{u,m,n}$ will be close to edge disjoint.  The condition $u(m/n)^{2-\frac{v(F)-2}{e(F)-1}}> 1$ is best possible for the conclusion to hold, as otherwise the count $u {m\choose v(F)}$ coming from copies of $F$ within a single clique will be larger.
	
	We need a few technical definitions to prove \Cref{lem:exp}.  These definitions are based off the observation that for a given copy $\tilde{F}$ of $F$ to be present in $G_{u,m,n}$, there must exist some (minimal) set of $m$-subsets $\{C_{i_1},\ldots,C_{i_s}\}$ which cover the edges of $\tilde{F}$ and which are all present as cliques in $G_{u,m,n}$.
	
	With this in mind, given a graph $\tilde{F}\sub K_n$, we say that a family $\c{C}$ of $m$-subsets of $K_n$ is an \textit{$\tilde{F}$-covering} if $E(\tilde{F})\sub \bigcup_{C\in \c{C}} {C\choose 2}$, if each $C\in \c{C}$ contains at least one edge of $\tilde{F}$, and if $C\cap V(\tilde{F})\ne  C'\cap V(\tilde{F})$ for all distinct $C,C'\in \c{C}$.  Given $G_{u,n,n}$ and a family of $m$-subsets $\c{C}=\{C_{i_1},\ldots,C_{i_s}\}$ of $K_n$, we let $\c{B}(\c{C})$ denote the event that $B_{i_j}=1$ for all $1\le j\le s$ (that is, this is the event that each of the $C_{i_j}$ appear in the union of \eqref{union}).   Given a graph $F$, we let $Z(F,G_{u,m,n})$ denote the set of pairs $(\tilde{F},\c{C})$ such that $\tilde{F}\sub K_n$ and $\c{C}$ is an $\tilde{F}$-covering with $\c{B}(\c{C})$ occurring.
	
	The crucial observation is the following.
	
	\begin{lem}\label{lem:Z}
		For all graphs $F$, we have $\N(F,G_{u,m,n})\le Z(F,G_{u,m,n})$.
	\end{lem}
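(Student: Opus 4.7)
The plan is to exhibit an injection from copies of $F$ in $G_{u,m,n}$ into the set $Z(F,G_{u,m,n})$ that preserves the copy itself in the first coordinate. Given any copy $\tilde F\sub G_{u,m,n}$, I need to produce an $\tilde F$-covering $\c{C}(\tilde F)$ for which $\c{B}(\c{C}(\tilde F))$ occurs; the map $\tilde F\mapsto (\tilde F,\c{C}(\tilde F))$ is then injective simply because the first coordinate already recovers $\tilde F$.

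To build $\c{C}(\tilde F)$, first let $\c{S}=\{C_i:B_i=1,\ E(\tilde F)\cap\binom{C_i}{2}\ne\emptyset\}$ be the family of random cliques actually present in $G_{u,m,n}$ that contain at least one edge of $\tilde F$. On $\c{S}$ put the equivalence relation $C\sim C'$ iff $C\cap V(\tilde F)=C'\cap V(\tilde F)$, and let $\c{C}(\tilde F)$ be obtained by picking one representative from each equivalence class. By construction, distinct members of $\c{C}(\tilde F)$ have distinct intersections with $V(\tilde F)$, and each member contains an edge of $\tilde F$ since it belongs to $\c{S}$.

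It remains to check that $E(\tilde F)\sub\bigcup_{C\in\c{C}(\tilde F)}\binom{C}{2}$. Fix any edge $e\in E(\tilde F)$. Since $\tilde F\sub G_{u,m,n}$, the edge $e$ appears in $G_{u,m,n}$, so by \eqref{union} there exists $i$ with $B_i=1$ and $e\sub C_i$; such a $C_i$ lies in $\c{S}$. The representative $C\in\c{C}(\tilde F)$ of the equivalence class of $C_i$ satisfies $C\cap V(\tilde F)=C_i\cap V(\tilde F)\supseteq e$, so $e\in\binom{C}{2}$, as required. Thus $\c{C}(\tilde F)$ is an $\tilde F$-covering, and the event $\c{B}(\c{C}(\tilde F))$ holds because every $C\in\c{C}(\tilde F)\sub\c{S}$ has its corresponding Bernoulli equal to $1$.

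There is no real obstacle here; the only point requiring any care is making sure the three clauses in the definition of an $\tilde F$-covering are simultaneously satisfied, which is exactly why the construction groups elements of $\c{S}$ by their trace on $V(\tilde F)$ before taking representatives.
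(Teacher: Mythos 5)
Your proof is correct and takes essentially the same approach as the paper: both map each copy $\tilde F\sub G_{u,m,n}$ to at least one valid pair $(\tilde F,\c{C})$ counted by $Z$. The only cosmetic difference is that the paper obtains the distinct-trace condition by taking a \emph{minimal} covering set of present cliques, whereas you enforce it explicitly by grouping cliques by their trace on $V(\tilde F)$ and choosing representatives; both verifications are sound.
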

	\begin{proof}
		Observe that if $\tilde{F} \subseteq G_{u,m,n}$ is isomorphic to $F$, then there exists an $\tilde{F}$-covering $\c{C}$ such that $\BB(\c{C})$ occurs; namely by taking a minimal set of $m$-subsets $C_{i_j}$ with $B_{i_j}=1$ that contain the set of edges of $\tilde{F}$ 
		(which must exist if $\tilde{F}\sub G_{u,m,n}$).  Thus for each subgraph $\tilde{F}\sub G_{u,m,n}$ counted by $\N(F,G_{u,m,n})$, there exists at least one pair $(\tilde{F},\c{C})$ counted by $Z(F,G_{u,m,n})$, proving the bound.
	\end{proof}
	With \Cref{lem:Z} in mind, we see that \Cref{lem:exp} will immediately be implied by the following result.
	
	\begin{lem}\label{lem:expTech}
		Let $F$ be a 2-balanced graph with $e(F)\ge 2$.  If $u,m,n$ with $m\le n$ are such that $um^2<n^2$ and $u(m/n)^{2-\frac{v(F)-2}{e(F)-1}}> 1$, then
		\[\E[Z(F,G_{u,m,n})]=O\left((u m^2 n^{-2})^{e(F)}n^{v(F)}\right ).\]
	\end{lem}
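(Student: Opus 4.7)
The plan is to bound $\E[|Z(F,G_{u,m,n})|]$ by decomposing the sum over pairs $(\tilde F, \c{C})$ according to the ``trace'' of the covering on $V(\tilde F)$. By independence of the Bernoulli variables $B_i$, for each fixed $\tilde F\sub K_n$ isomorphic to $F$ and each $\tilde F$-covering $\c{C}$ of size $s$, one has $\Pr[\c{B}(\c{C})] = (u/\binom{n}{m})^s$. I would group coverings by the collection of distinct subsets $S_j := C_j \cap V(\tilde F) \sub V(\tilde F)$ of sizes $s_j := |S_j| \geq 2$ whose pair sets $\binom{S_j}{2}$ cover $E(\tilde F)$; the number of $\c{C}$ realizing a given trace pattern is then $\prod_j \binom{n-v(F)}{m-s_j}$. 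A routine estimate gives $\binom{n-v(F)}{m-s_j}/\binom{n}{m} = O_F((m/n)^{s_j})$ (using $(m)_{s_j}/(n-v(F))_{s_j}$), and since there are $O(n^{v(F)})$ choices of $\tilde F$ and only $O_F(1)$ trace patterns per $\tilde F$, the lemma reduces to showing, for each valid trace pattern $\{S_1,\ldots,S_s\}$,
\[u^s (m/n)^{\sum_j s_j} = O\left((um^2/n^2)^{e(F)}\right).\]

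I would prove this pointwise inequality by splitting on the size of $s$ relative to $e(F)$. In the case $s > e(F)$: since each $s_j \geq 2$, one has $\sum s_j \geq 2s$, so $u^s(m/n)^{\sum s_j} \leq [u(m/n)^2]^s$; the hypothesis $um^2 < n^2$ gives $u(m/n)^2 < 1$, and since $s > e(F)$, this quantity is bounded by $[u(m/n)^2]^{e(F)} = (um^2/n^2)^{e(F)}$. In the case $s \leq e(F)$, I would invoke 2-balance through the induced subgraphs $F[S_j]$. Setting $\rho := (e(F)-1)/(v(F)-2)$, for each $j$ with $s_j \geq 3$, 2-balance applied to $F[S_j] \sub F$ yields $e(F[S_j]) - 1 \leq \rho(s_j - 2)$, and this inequality holds trivially when $s_j = 2$ (where $e(F[S_j]) \leq 1$). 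Summing over $j$ and using $\sum_j e(F[S_j]) \geq e(F)$ (since the $E(F[S_j])$ cover $E(F)$) yields
\[\sum_j s_j \geq 2s + (e(F)-s)/\rho.\]
Substituting this bound:
\[\f{u^s(m/n)^{\sum_j s_j}}{(um^2/n^2)^{e(F)}} \leq \left[u(m/n)^{2 - 1/\rho}\right]^{-(e(F)-s)},\]
and since $2 - 1/\rho = 2 - (v(F)-2)/(e(F)-1)$, this is at most $1$ by the hypothesis $u(m/n)^{2-(v(F)-2)/(e(F)-1)} > 1$.

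The main technical subtlety lies in identifying the correct way to apply 2-balance---namely, to the induced subgraphs $F[S_j]$ rather than to $F$ itself---and in arranging the case split so that the two separate hypotheses $um^2<n^2$ and $u(m/n)^{2-(v(F)-2)/(e(F)-1)} > 1$ each cover their own range of $s$, with no slack. Once the inequality $\sum s_j \geq 2s + (e(F)-s)/\rho$ is in hand, both cases become purely algebraic, and one sees that the extremal trace pattern---matching the target up to constants---is the ``edge-cover'' one with $s = e(F)$ and all $s_j = 2$.
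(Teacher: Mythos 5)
Your proposal is correct, and its overall architecture is the same as the paper's: fix $\tilde F$, group coverings by their trace (``type'') on $V(\tilde F)$, use independence and the estimate $\binom{n-|S_j|}{m-|S_j|}/\binom{n}{m}\le (m/n)^{|S_j|}$ to reduce everything to the purely combinatorial inequality $u^s(m/n)^{\sum_j s_j}\le (um^2/n^2)^{e(F)}$ for every valid trace pattern. The only genuine difference is how that inequality is proved. The paper runs a local exchange argument on a weight-maximizing pattern: if two sets share two vertices it merges them (using $um^2<n^2$), and if a set has more than two vertices it replaces it by the edges of the induced subgraph (using 2-balance and $u(m/n)^{2-(v(F)-2)/(e(F)-1)}>1$), concluding that $E(F)$ is the unique maximizer. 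You instead prove the bound pointwise for an arbitrary pattern, splitting on $s$ versus $e(F)$ and, in the main case, summing the 2-balance inequalities $s_j-2\ge (e(F[S_j])-1)/\rho$ over the pattern together with $\sum_j e(F[S_j])\ge e(F)$ to get $\sum_j s_j\ge 2s+(e(F)-s)/\rho$, after which the conclusion is a one-line computation. Both arguments use the two hypotheses in exactly the same roles; your direct summation is arguably slightly cleaner in that it avoids having to verify that the exchange moves preserve validity and strictly increase the weight, while the paper's version has the mild bonus of identifying the extremal pattern explicitly. I checked the details of your inequality (including the degenerate $s_j=2$ case and the sign of the exponent $-(e(F)-s)$) and they are sound.
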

	\begin{proof}
		Fix any $\tilde{F}\sub K_n$ isomorphic to $F$.  Since there are at most $n^{v(F)}$ choices for $\tilde{F}$, we see that it suffices to prove that the expected number of $\tilde{F}$-coverings $\c{C}$ for which $\c{B}(\c{C})$ occurs is at most $O((u m^2 n^{-2})^{e(F)})$.  For this we need a few more definitions.
		
		Given a family $\c{A}$ of subsets of $V(\tilde{F})$, we say that an $\tilde{F}$-covering $\c{C}$ has \textit{type $\c{A}$} if \[\{C\cap V(\tilde{F}):C\in \mathcal{C}\}=\c{A}.\]  We say that a family $\c{A}$ is \textit{valid} if $\tilde{F}\sub \bigcup_{A\in \c{A}} {A\choose 2}$ and if  each $A\in \c{A}$ contains at least one edge of $\tilde{F}$.  Observe that by definition, if $\c{C}$ is an $\tilde{F}$-covering, then $\c{C}$ is of type $\c{A}$ for some valid $\c{A}$ with $|\c{A}|=|\c{C}|$.  Given a valid $\c{A}$, we define
		\[w(\c{A}):=u^{|\c{A}|} (m/n)^{\sum_{A\in \c{A}} |A|}.\]
		
		\begin{claim}
			For any family $\c{A}$, let $\c{T}(\c{A})$ denote the number of $\tilde{F}$-coverings $\c{C}$ of type $\c{A}$ such that $\c{B}(\c{C})$ occurs.  Then 
			\begin{equation}\label{sep}
				\E[\c{T}(\c{A})] \leq w(\c{A}).
			\end{equation}
		\end{claim}
		\begin{proof}
			Let $\c{A}=\{A_1,\ldots,A_s\}$, and let $\c{S}$ be the set of $\tilde{F}$-coverings $\c{C}$ of type $\c{A}$. Since $|\c{S}|$ is at most the number of tuples $(C_{i_1},\ldots,C_{i_s})$ such that each $C_{i_j}$ is an $m$-subset containing $A_j$, we find that
			\[|\c{S}|\le \prod_{j=1}^s {n-|A_j|\choose m-|A_j|}= \prod_{j=1}^s  {n\choose m}\binom{m}{|A_j|}/\binom{n}{|A_j|} \le  \prod_{j=1}^s {n\choose m} (m/n)^{|A_j|},\]
			Now, any fixed set $\c{C}=\{C_{i_1},\ldots, C_{i_s}\}$ of distinct $m$-subsets has $\c{B}(\c{C})$ occurring with probability $u^s {n\choose m}^{-s}$, so by a union bound we see that
			\[\E[\c{T}(\c{A})] \leq u^s {n\choose m}^{-s}  |\mathcal{S}|\leq  \prod_{j=1}^s u(m/n)^{|A_j|}=w(\c{A}),
			\]
			proving the claim.
		\end{proof}
		With this claim, we see that the expected number of $\tilde{F}$-coverings for which $\c{B}(\c{C})$ occurs is at most
		\[\sum_{\c{A}} \E[\c{T}(\c{A})]\le 2^{2^{v(F)}}\cdot \max_{\c{A}} w(\c{A}),\]
		where the sum and the maximum range over all valid families $\c{A}$. Thus to prove the result, it suffices to show 
		\begin{equation}\max_{\c{A}} w(\c{A})\le (u m^2 n^{-2})^{e(F)},\label{maxbound}\end{equation}
		where the maximum ranges over all valid families $\c{A}$.  We will call any $\c{A}$ achieving the maximum in \eqref{maxbound} a \textit{maximizer}, and our goal will be to show that the only maximizers are those with $A\in E(F)$ for all $A\in \c{A}$.  We do this through the following two claims.
		\begin{claim}
			Every maximizer $\c{A}$ has $|A \cap B|\le 1$ for all distinct $A, B\in \c{A}$.
		\end{claim}
		\begin{proof}
			Let $\c{A}$ be a maximizer and assume for contradiction that $|A\cap B|\ge 2$ for some distinct $A,B\in \c{A}$.  In this case, the set $\c{A}'=(\c{A}\sm\{A,B\})\cup \{A\cup B\}$ is a valid family which satisfies $|\c{A}'|=|\c{A}|-1$ and \[\sum_{D\in \c{A}'} |D|=-|A\cap B|+\sum_{D\in \c{A}} |D|,\] since $|A\cup B|=|A|+|B|-|A\cap B|$.  Thus
			\[w(\c{A}')=u^{-1} (m/n)^{-|A\cap B|}w(\c{A})>w(\c{A}),\]
			where the inequality used $|A\cap B|\ge 2$ together with $um^2< n^2$ and $m\le n$ applied $|A\cap B|-2$ times.  This contradicts $\c{A}$ being a maximizer, giving the result.
		\end{proof}
		\begin{claim}
			Every maximizer $\c{A}$ has $A\in E(F)$ for all $A\in \c{A}$.
		\end{claim}
		\begin{proof}
			Assume for contradiction that $\c{A}$ is a maximizer with $|A|>2$ for some $A\in \c{A}$.  Let $\c{E}$ denote the set of edges of $F':=F[A]$, noting that $\sum_{e\in \c{E}} |e|=2e(F')$ and $|A|=v(F')$. Let $\c{A}'=(\c{A}\setminus \{A\})\cup \c{E}$. Observe that $\c{A}'$ is also a valid family with $|\c{A}'|=|\c{A}|+e(F')-1$ (noting that we have $\c{E}\cap \c{A}=\emptyset$ by the previous claim since $A\in \c{A}$ and $\c{A}$ is a maximizer).  Thus, 
			\[w(\c{A}')=u^{e(F')-1}(m/n)^{2e(F')-v(F')} w(\c{A})>w(\c{A}),\]
			where the last step used 
			\[u (m/n)^{2-\frac{v(F')-2}{e(F')-1}}\ge u(m/n)^{2-\frac{v(F)-2}{e(F)-1}}> 1,\]
			with the first inequality using that $F$ is 2-balanced (and $m\leq n$) and the last inequality using the hypothesis of the lemma. This contradicts $\c{A}$ being a maximizer, so we must have $|A|\le 2$ for all $A\in \c{A}$.  Moreover, each $A\in \c{A}$ must contain an edge of $F$ by definition of $\c{A}$ being valid, giving the result.
		\end{proof}
		The claims above imply $\c{A}=E(F)$ is the only maximizer, in which case $w(\c{A})=u^{e(F)}(m/n)^{2e(F)}$, giving \eqref{maxbound} and hence the result.
	\end{proof}
	
	In addition to proving \Cref{lem:exp}, \Cref{lem:expTech} can be used to derive our general lower bound on $\ex(n,K_r,F)$.
	
	\begin{proof}[Proof of \Cref{thm:TuranBound}]
		Recall that we aim to prove that if $F$ is a 2-balanced graph with $e(F)\ge 2$ and $2\le r<v(F)$ an integer, then
		\[\ex(n, F, K_r) =\Omega(n^{2 - \frac{v(F)-2}{e(F)-1}}).\]
		
		Let $m=r$ and $u = 2(n/m)^{2 - \f{v(F) - 2}{e(F) - 1}} = \Omega(n^{2 - \f{v(F) - 2}{e(F) - 1}})$.  Given a constant $\alpha>0$, we let $G_\al=G_{\al u,m,n}$. Note that the conditions of Lemma~\ref{lem:expTech} apply to $G_1$ (though not necessarily for $G_{\al}$). We aim to show that for small $\al$, we can alter $G_\al$ to make it $F$-free by removing a small portion of its $K_r$'s.
		
		Let $Z_\al=Z(F,G_\al)$.  Since $v(F)>r=m$, any $\tilde{F}$-covering $\c{C}=\{C_{i_1},\ldots,C_{i_s}\}$ of some $\tilde{F}\cong F$ must have $s\ge 2$, so by definition of $Z$  we find $\E[Z_\al]\le \al^2 \E[Z_1]$.  By Lemma~\ref{lem:expTech} we see
		\[\E[Z_1] = O((um^{2}n^{-2})^{e(F)}n^{v(F)}) = O(n^{2 - \f{v(F) - 2}{e(F) - 1}}) = O(u),\]
		and thus
		\[\E[Z_\alpha] = O(\alpha^2 u).\]
		
		Recall that $B_1,B_2,\ldots$ are the Bernoulli random variables associated to $G_\al$ such that the $i$th clique $C_i$ is included in the union \eqref{union} for $G_\al$ if $B_i=1$.  Let $Y(G_\al)$ denote the number of $i$ such that $B_i=1$, noting that $\N(K_r,G_\al)\ge Y(G_\al)$ and that $\E[Y(G_\al)]=\al u$.  Thus we find $\E[Y(G_\al)] - \E[Z_\alpha] > \alpha u - O(\alpha^2 u)$, so for all $\al$ there is a realization $G'_\al$ of $G_\al$ with \[Y(G_\al') - Z(G'_\al) > \alpha u - O(\alpha^2 u).\] Taking $\alpha$ sufficiently small and removing one clique from each pair $(\tilde{F}, \{C_{i_1},\ldots \})$ in $G'_\al$ counted by $Z(F,G'_\al)$ (meaning we remove the clique from the union \eqref{union}, which does not necessarily remove any edges from $G'_\al$) results in an $F$-free graph $G''_\al$ with \[\N(K_r, G''_\al)\ge Y(G''_\al) = \Omega(u) = \Omega(n^{2 - \f{v(F) - 2}{e(F) - 1}}).\]

	\end{proof}

	With this all established, we can now prove our main result for this subsection.

	\begin{proof}[Proof of \Cref{thm:randomCliques}]
		Recall that we wish to prove that if $F$ is a 2-balanced graph with $e(F)\ge 2$ and $2\le r<v(F)$ is an integer, then for all $1\le N\le {n\choose r}$, there exists an $n$-vertex graph $G$ with $\N(K_r,G)=\Om(N)$ and with
		\[\N(F,G)=O\left((N n^{-r})^{\frac{e(F)(v(F)-2)}{(r-2)(e(F)-1)+v(F)-2}} n^{v(F)}\right).\]
		
		We first consider some trivial cases.  If $F$ is the disjoint union of $K_2$'s, then one can check that the bound above is achieved by taking $G$ to be a clique on $N^{1/r}$ vertices.  If $F$ has an isolated vertex $x$, then $F':=F-x$ has at least 3 vertices (since $e(F)\ge 2$) and $\frac{e(F')-1}{v(F')-2}>\frac{e(F)-1}{v(F)-2}$, contradicting $F$ being 2-balanced.  Thus we can assume $F$ has no isolated vertices and at least one component which is not a $K_2$, from which it follows that \[2e(F)>v(F)\ge 3,\]
		where here we used that no isolated vertices implies $2e(C)\ge v(C)$ for all components $C$, and the component which is not a $K_2$ gives a strict inequality.
		
		The result is also trivial if $N\le \ex(n,K_r,F)$, as in this case there exist $F$-free graphs with the desired number of copies.  Thus by \Cref{thm:TuranBound} we can assume \[N\ge c n^{2-\frac{v(F)-2}{e(F)-1}}\]
		for some $c\le 1$.  Similarly the result is trivial if  $N=\Om(n^r)$ by taking $G=K_n$, so we can assume $N$ is at most a small constant times $n^r$ (with this constant depending on $F,r,c$).
		
		With all these assumptions above in mind, we define $C=2r c^{-\frac{e(F)-1}{(r-2)(e(F)-1)+v(F)-2}}$ and take \[u=2(N n^{-r})^{\frac{v(F)-2e(F)}{(r-2)(e(F)-1)+v(F)-2}},\hspace{3em} m=C(N n^{-r})^{\frac{e(F)-1}{(r-2)(e(F)-1)+v(F)-2}}\cdot n.\]   
		The lower bound $N\ge c n^{2-\frac{v(F)-2}{e(F)-1}}$ immediately gives $m\ge 2r$.  Observe that $v(F)-2e(F)<0$ and $(r-2)(e(F)-1)+v(F)-2>0$ due to the bound $2e(F)>v(F)\ge 3$ above and $r\ge 2$, which in particular implies $C\ge 1$ since $c\le 1$.  With this and our assumption that $N$ is at most a small constant times $n^r$, we observe that $m<n$, that $u\ge 2$, 
		\[1\le um^r=2CN\le n^r,\] 
		\[u(m/n)^{2-\frac{v(F)-2}{e(F)-1}}=2 C^{2-\frac{v(F)-2}{e(F)-1}}>1,\]
		and
		\[um^2 n^{-2}=2C^2(N n^{-r})^{\frac{v(F)-2}{(r-2)(e(F)-1)+v(F)-2}}<1.\]
		Now consider $G=G_{u,m,n}$ and let $\del$ be the constant from \Cref{lem:Krcount}.  By applying Markov's inequality to \Cref{lem:exp}, we find that \[\N(F, G) = O\left((N n^{-r})^{\frac{e(F)(v(F)-2)}{(r-2)(e(F)-1)+v(F)-2}} n^{v(F)}\right)\] with probability at least $1 - \del/2$.  By \Cref{lem:Krcount}, we see $\N(K_r,G)=\Om(N)$ with probability at least $\del$.  In particular, $G$ satisfies the desired properties with positive probability, showing  such a graph exists.
	\end{proof}

	\subsection{Cliques from Bipartite Graphs}\label{sec:bipartiteConst}
	Throughout this subsection we work with bipartite graphs $B$ with ordered bipartitions $(U,V)$.
	
	The intuition for our construction is as follows. We again consider a graph $G$ formed by taking the union of roughly $u$ cliques of size $m$ for some parameters $u,m$.  We can not have $m$ larger than what it was in the proof for \Cref{thm:randomCliques}, as otherwise the number of copies of $K_{2,t}$ contained within the $m$ cliques will be too large.  Thus we must take $m$ to be smaller and $u$ to be larger.  If we put the $u$ cliques down uniformly at random, then $G$ would contain too many copies of $K_{2,t}$ which have each edge contained in a distinct clique (intuitively because $G$ behaves locally like $G_{n,p}$).  Ideally then, we want to place our cliques down so that there exists no $K_{2,t}$ with each edge contained in a distinct clique.  For this the following will be useful.
	
	\begin{defn}
		Given a bipartite graph $B$ with  ordered bipartition $(U,V)$, we define the \textit{clique graph} $K(B)$ to be the graph with vertex set $V$ such that $v,v'\in V$ are adjacent if and only if $v,v'$ have a common neighbor in $U$.  Equivalently, $K(B)$ is formed by taking the union\footnote{A helpful mnemonic is that $U$ is the set of cliques that we Union over.} of the cliques $N_B(u)$ with $u\in U$. 
	\end{defn}
	Unwinding the intuition from above; we want to find a bipartite $B$ which avoid subdivisions of $K_{2,t}$ (as these correspond to edges belonging to distinct cliques in $K(B)$), or equivalently, to avoid having $t$ paths of length 4 between any two vertices of $V$ in $B$.  And indeed, the following shows that this is essentially all we need.
	
	\begin{lem}\label{lem:K2lcount}
		If $B$ is a bipartite graph with bipartition $(U,V)$ such that every vertex of $U$ has degree at most $d$ and such that there are at most $\ell\ge 2$ distinct paths of length at most 4 between any two vertices of $V$, then for all $t>\ell$ we have
		\[\N(K_{2,t},K(B))\le \ell^{2t} d^{2+t}|U|.\]
	\end{lem}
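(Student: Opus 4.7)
My plan is to count copies of $K_{2,t}$ in $K(B)$ according to their two-vertex part. Writing $d_{v_1,v_2} := |N_{K(B)}(v_1) \cap N_{K(B)}(v_2)|$, every copy of $K_{2,t}$ specifies a pair $\{v_1,v_2\} \subseteq V$ together with a $t$-subset of their common $K(B)$-neighbors, so
\[
\N(K_{2,t}, K(B)) \leq \sum_{\{v_1,v_2\} \subseteq V} \binom{d_{v_1,v_2}}{t}.
\]
I will establish two claims: (a) $d_{v_1,v_2} \leq \ell d$ for every pair of distinct $v_1,v_2 \in V$, and (b) if $d_{v_1,v_2} \geq t$ then $v_1$ and $v_2$ share at least one common neighbor in $B$.

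For (a), fix distinct $v_1,v_2 \in V$ and consider any $w \in N_{K(B)}(v_1) \cap N_{K(B)}(v_2)$. The two $K(B)$-edges at $w$ admit witnesses $u_1,u_2 \in U$ with $\{v_1,w\} \subseteq N_B(u_1)$ and $\{v_2,w\} \subseteq N_B(u_2)$. Either some witness has $u_1 = u_2$, so that $w$ lies in $N_B(u)$ for some common $B$-neighbor $u$ of $v_1,v_2$, or every witness has $u_1 \neq u_2$, giving a simple length-$4$ path $v_1 - u_1 - w - u_2 - v_2$ in $B$ (and distinct such $w$'s produce distinct paths). Letting $\ell_2$ and $\ell_4$ denote the numbers of length-$2$ and simple length-$4$ paths between $v_1$ and $v_2$ in $B$, the first type contributes at most $\ell_2 d$ common $K(B)$-neighbors and the second at most $\ell_4$, so the hypothesis $\ell_2 + \ell_4 \leq \ell$ yields $d_{v_1,v_2} \leq \ell_2 d + \ell_4 \leq \ell d$.

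Claim (b) is the crucial use of the assumption $t > \ell$: if $\ell_2 = 0$, then $d_{v_1,v_2} \leq \ell_4 \leq \ell < t$, so having $d_{v_1,v_2} \geq t$ forces $\ell_2 \geq 1$. The number of unordered pairs of $V$-vertices sharing a $B$-neighbor is at most $\sum_{u \in U}\binom{|N_B(u)|}{2} \leq |U| d^2/2$. Combining this with (a) gives
\[
\N(K_{2,t}, K(B)) \leq \tfrac{1}{2}|U| d^2 \cdot (\ell d)^t,
\]
which sits comfortably inside the claimed bound $\ell^{2t} d^{t+2} |U|$ since $\ell \geq 2$.

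The main obstacle I anticipate is that $V$-vertices have no degree bound in $B$, which dooms any direct enumeration of witness tuples $(v_1,v_2,w_j,u_{1,j},u_{2,j})$. The short-path hypothesis has to be leveraged twice: once to uniformly bound $d_{v_1,v_2}$, and once (via $t > \ell$) to restrict to pairs with a common $B$-neighbor. It is only the latter step that finally replaces the wild factor of $|V|^2$ by something tame in $|U|$.
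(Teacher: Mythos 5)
Your proposal is correct and follows essentially the same route as the paper: both arguments rest on the two observations that (i) a pair of $V$-vertices with at least $t$ common $K(B)$-neighbors must have a common $B$-neighbor (via the $t>\ell$ hypothesis and distinct length-4 paths), and (ii) any pair has at most $O(\ell d)$ common $K(B)$-neighbors, after which one multiplies by the $O(d^2|U|)$ bound on pairs sharing a $B$-neighbor. Your split into $\ell_2$ and $\ell_4$ is a slightly sharper bookkeeping of the paper's $\ell(d+1)$ bound, but the proof is the same.
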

	
	Note that the number of $K_{2,t}$'s  within any of the $N_B(u)$ cliques of $K(B)$ is at most $d^{2+t}|U|$, so the lemma says that this trivial count is essentially correct.
	\begin{proof}
		We first claim that if $v_1,v_2,w_1,\ldots,w_t$ form a $K_{2,t}$ in $K(B)$ with $v_1,v_2$ adjacent to all of the $w_j$ vertices, then $v_1v_2\in E(K(B))$ (i.e.\ they have a common neighbor in $B$).  Indeed, for each edge $v_iw_j\in E(K(B))$ there exists a $u_{i,j}\in U$ which is adjacent to both of these vertices.  If $u_{1,j}=u_{2,j}$ for any $j$ then we are done, so we assume this is not the case.  Thus $v_1u_{1,j}w_ju_{2,j}v_2$ is a path of length 4 from $v_1$ to $v_2$ in $B$, and each of these $t>\ell$ paths are distinct since the $w_j$ vertices are distinct.  This is impossible by our condition on $B$, so the claim follows.
		
		For any pair of vertices $v_1,v_2\in K(B)$, we claim that there are at most $\ell(d+1)$ vertices $w$ which are adjacent to both $v_1,v_2$ in $K(B)$.  Indeed, for any such vertex $w$ there must exist two (possibly non-distinct) vertices $u^w_1,u_2^w\in U$ such that $w,v_i\in N_B(u_i^w)$.  If $u^w_1=u^w_2:=u$, then in particular $u$ is a common neighbor of $v_1,v_2$ in $B$.  By assumption there are at most $\ell $ such vertices $u$, and each of them have at most $d$ neighbors $w$.  Thus the number of common neighbors with $u^w_1=u_2^w$ is at most $\ell d$.  On the other hand, for each distinct $w$ with $u^w_1\ne u^w_2$, there exists a distinct path $v_1u_1^wwu_2^wv_2$ of length 4 from $v_1$ to $v_2$ in $B$.  By assumption, at most $\ell$ such vertices can exist, giving the claim.
		
		By these two claims, every $K_{2,t}$ in $K(B)$ can be formed by first choosing $v_1,v_2$ adjacent in $K(B)$ (i.e.\ with a common neighbor in $U$) and then choosing some $t$ of the at most $\ell(d+1)\le 2\ell d$ common neighbors they have in $K(B)$.  In total the number of ways of doing this is at most
		\[d^2|U|\cdot {2\ell d\choose t}\le \ell^{2t} d^{2+t}|U|.\]
	\end{proof}
	It remains to find graphs $B$ avoiding the structures in \Cref{lem:K2lcount} which are  ``dense'' (so that $K(B)$ will have many $K_r$'s).  To this end, we define $\ex(m,n,\c{P}_{\le 4}^{\ell+1})$ to be the maximum number of edges that a bipartite graph $B$ with bipartition $(U,V)$ and $|U|=m,|V|=n$ can have if there are at most $\ell$ distinct paths of length at most 4 between any two vertices of $V$, and in this case we say $B$ \textit{avoids} $\c{P}_{\le 4}^{\ell+1}$. It is relatively easy to upper bound this extremal number by adapting an argumenent of Bukh and Conlon~\cite[Lemma 1.1] {bukh2018rational}.
	\begin{lem}\label{lem:P4pExt}
		\[\ex(m,n,\mathcal{P}_{\le 4}^{\ell+1}) < 4 \ell^{1/4} n^{3/4} m^{1/2} + 10m + 10n.\]
	\end{lem}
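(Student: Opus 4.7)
The plan is to adapt the standard cleaning-and-counting strategy of Bukh--Conlon. I begin with a preprocessing step: iteratively delete from $B$ any vertex whose current degree is at most $d_0 := 10$. Each deleted vertex contributes at most $d_0$ edges, so this removes at most $10(m+n)$ edges in total, absorbing the $10m + 10n$ term in the target bound. Let $B'$ denote the resulting bipartite graph, with parts $U' \subseteq U$, $V' \subseteq V$, $e' := e(B')$ edges, and minimum degree at least $d_0$. It then suffices to show $e' < 4\ell^{1/4} m^{1/2} n^{3/4}$.

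The counting strategy is to compare the numbers of length-$4$ walks and length-$4$ paths in $B'$ with both endpoints in $V'$. For the lower bound, define $g(v) := \sum_{u \in N_{B'}(v)} d_{B'}(u)$ for $v \in V'$. Then the number $W$ of ordered length-$4$ walks $(v, u_1, v', u_2, v'')$ in $B'$ (with consecutive adjacencies but no distinctness required) equals $\sum_{v' \in V'} g(v')^2$, and two applications of Cauchy--Schwarz give
\[
W \;=\; \sum_{v' \in V'} g(v')^2 \;\ge\; \frac{\bigl(\sum_{u \in U'} d_{B'}(u)^2\bigr)^2}{|V'|} \;\ge\; \frac{(e'^2/|U'|)^2}{|V'|} \;\ge\; \frac{e'^4}{m^2 n}.
\]
For the upper bound, every length-$4$ walk is either a genuine path (all five vertices distinct) or degenerate. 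By the hypothesis, each ordered pair of distinct $V'$-vertices supports at most $\ell$ paths of length at most $4$, hence at most $\ell$ of length exactly $4$; summing gives at most $\ell \cdot n(n-1) \le \ell n^2$ ordered length-$4$ paths.

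To close the argument, I need to show that the number of degenerate walks is also at most $\frac{1}{2}W$, say, so that $\frac{1}{2} e'^4/(m^2 n) \le \ell n^2$, which after rearrangement and constant-chasing yields $e' \le 4\ell^{1/4} m^{1/2} n^{3/4}$. The degenerate walks split into four types ($u_1 = u_2$, $v = v'$, $v' = v''$, and $v = v''$), and each is controlled using the length-$2$ consequence of the hypothesis: $|N_{B'}(w) \cap N_{B'}(w')| \le \ell$ for distinct $w, w' \in V'$, which gives $\sum_u d_{B'}(u)^2 \le e' + \ell n^2$. Combined with the minimum degree $d_0 \ge 10$, the types $v = v'$, $v' = v''$, and $v = v''$ each reduce to a product of manageable sums.

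The main obstacle will be the $u_1 = u_2$ degenerate walks, which contribute $\sum_u d_{B'}(u)^3$. This is not directly bounded by the length-$2$ hypothesis on the $V$-side, and the naive estimate $\sum_u d_{B'}(u)^3 \le (\max_u d_{B'}(u)) \cdot \sum_u d_{B'}(u)^2$ is too weak without a maximum degree cap. The fix I expect to use is a two-sided cleaning: at the outset, also delete any $U$-vertex with degree exceeding, say, $e'/m$ (or apply the symmetric version of the argument), or equivalently, regroup the problematic walks with $u_1 = u_2$ into a separate count of $K_{2,2}$-type structures controlled by the four-cycle count $\sum_{\{u_1, u_2\}} \binom{|N(u_1) \cap N(u_2)|}{2} \le \binom{\ell}{2}\binom{n}{2}$ via the $V$-side bound. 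Tuning the constants carefully should yield the explicit $4$ and $10$ in the final inequality.
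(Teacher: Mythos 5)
Your overall architecture (clean to positive minimum degree, absorb $10m+10n$, then compare fourth-power walk counts against the path bound $\ell n^2$) is reasonable, and your Cauchy--Schwarz lower bound $W\ge e'^4/(m^2n)$ is correct. But the obstacle you flag at the end is a genuine gap, not a constant-chasing detail, and neither of your proposed fixes repairs it. The walks with $u_1=u_2$ number exactly $\sum_{u}d_{B'}(u)^3$, and this can equal essentially \emph{all} of $W$: since $g(v')^2\ge\sum_{u\in N(v')}d(u)^2$, one always has $W\ge\sum_u d(u)^3$, and equality (up to lower-order terms) occurs whenever the $U$-side degree sequence is dominated by a few huge-degree vertices (e.g.\ a near-star, which survives your min-degree-$10$ cleaning on the $V$-side and is $\mathcal{P}_{\le 4}^{\ell+1}$-free). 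In that regime there are simply no genuine length-$4$ paths, so no inequality of the form ``degenerate $\le\frac12 W$'' can hold. Your first fix --- deleting $U$-vertices of degree exceeding $e'/m$ --- is not a valid cleaning step, because the edges incident to such vertices can constitute almost all of $e'$. Your second fix is a non sequitur: walks with $u_1=u_2$ contain only one $U$-vertex and hence no $C_4$, so the four-cycle count $\sum\binom{|N(u_1)\cap N(u_2)|}{2}$ says nothing about them.

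The paper avoids walk-counting entirely. It sets $U'=\{u: d(u)\ge e/(2m)\}$, which retains at least $e/2$ edges (the discarded vertices carry at most $m\cdot e/(2m)$ edges), and then lower-bounds the number of genuine paths directly by a greedy embedding from the middle vertex: choose $v\in V$, then \emph{two distinct} neighbours in $U'$ (the $\binom{d'_v}{2}$ factor, which builds $u_1\ne u_2$ into the count from the start), then distinct endpoints using the minimum degree $e/(2m)$ of $U'$. Convexity gives $\sum_v\binom{d'_v}{2}\ge n\binom{e/(2n)}{2}$, yielding $X\ge e^4/(2^8m^2n)$ against $X\le\ell n^2/2$. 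If you want to salvage your walk approach you would need some genuine degree regularization on the $U$-side (e.g.\ a dyadic restriction to $U$-vertices of comparable degree, at the cost of logarithmic factors and the stated constant $4$); as written, the step ``degenerate walks $\le\frac12 W$'' is false.
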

	\begin{proof}
		Let $B = (U,V, E)$ be a bipartite graph with $|U|=m$, $|V|=n$, and $|E|=e$ which avoids $\c{P}_{\le 4}^{\ell+1}$.  If $e<10m$ or $e<10n$ then the result follows, so assume this is not the case.
		
		Let $U'\subseteq U$ be the set of vertices with minimum degree at least $e/(2m)$.  For $v\in V$, let $d'_v=|N(v)\cap U'|$, and note $\sum_{v\in V} d_v' \geq e/2$, as at most $e/2$ edges can be incident to vertices in $U-U'$. Let $X$ denote the number of labelled copies of $P_4$ in $U' \cup V$ with endpoints in $V$. One can lower bound $X$ by greedily embedding copies of $P_4$ starting with the middle vertex to get
		\[X \geq \sum_{v\in V} \binom{d_v'}{2}(e/(2m) - 1)(e/(2m) - 2) \geq n\binom{e/(2n)}{2} \left(\frac{e}{4m}\right)^2 \geq \frac{e^4}{2^8 m^2n},\]
		where the second and third inequalities use that $e \geq 10m$ and $e \geq 10n$ respectively.
		On the other hand, there are $\binom{n}{2}$ choices for the endpoints of each path, and each pair may appear at most $\ell$ times as endpoints, so 
		\[X \leq \ell n^2/2.\]
		Comparing the lower and upper bounds gives
		\[e^4 \leq 2^7 \ell n^3 m^2,\]
		which implies the lemma.
	\end{proof}
	
	The next result shows that whenever $\ex(m,n,\mathcal{P}_{\le 4}^{\ell+1})$ is close to the upper bound of \Cref{lem:P4pExt} we can find a graph with many $K_r$'s and few $K_{2,t}$'s.
	
	\begin{lem}\label{lem:BipToSatgraph}
		Fix $r\ge 3$, $\ell\ge 2$ and $t> \ell,r-2$. Assume there exists a $c>0$ such that for all $\ep$ with $0\le \ep\le \frac{r-2}{2t}$ we have $\ex(n^{3/2-2\ep},n,\c{P}_{\le 4}^{\ell+1})>2cn^{3/2-\ep}$.  Then for all $0\le \ep\le \frac{r-2}{2t}$ there exists an $n$-vertex graph $G$ with $\Om_c(n^{3/2+(r-2)\ep})$ $r$-cliques and $O_c(n^{3/2+t\ep })$ $K_{2,t}$'s.
	\end{lem}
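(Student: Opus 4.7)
The plan is to construct $G$ as the clique graph $K(B^*)$ of a cleaned-up version of the bipartite graph $B$ guaranteed by the hypothesis, where $B$ has $|U|=n^{3/2-2\ep}$, $|V|=n$, $e(B)\ge 2cn^{3/2-\ep}$, and avoids $\c{P}_{\le 4}^{\ell+1}$.  The intuition is that \Cref{lem:K2lcount} gives a tight $K_{2,t}$ bound provided the $U$-degrees of our bipartite graph are uniformly $\Theta(n^\ep)$, but $B$ itself may have a few vertices of much larger degree; the cleanup will kill those while preserving enough edges, and the $\c{P}_{\le 4}^{\ell+1}$-avoidance will let us simultaneously lower-bound the $K_r$ count by a single-neighbourhood count.

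First, I would fix a constant $C=C(c,\ell,r,t)$, set $D=Cn^\ep$, and \emph{split} every $u\in U$ with $d_B(u)>D$ into $\lceil d_B(u)/D\rceil$ new $U$-vertices, each adjacent to a disjoint subset of $N_B(u)$ of size at most $D$.  The resulting bipartite graph $B'$ has $e(B')=e(B)$, all $U$-degrees at most $D$, and $|U(B')|\le |U|+e(B)/D=O(n^{3/2-2\ep})$.  The key point is that $B'$ still avoids $\c{P}_{\le 4}^{\ell+1}$: any path of length at most $4$ between two $V$-vertices in $B'$ projects, by identifying each new piece with its parent in $U$, to a path of length at most $4$ in $B$, and this projection is injective because distinct pieces of a common $u$ have disjoint $V$-neighbourhoods (so they cannot simultaneously appear as the two $U$-vertices on a length-$4$ path).

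Next, I would delete from $B'$ every $U$-vertex of degree less than $c'n^\ep$ for a sufficiently small constant $c'=c'(c)$, producing $B^*$.  The total number of edges removed is at most $c'n^\ep\cdot|U(B')|=O(c'n^{3/2-\ep})$, so choosing $c'$ small keeps $e(B^*)=\Omega(n^{3/2-\ep})$; all surviving $U$-degrees now lie in $[c'n^\ep,D]$, and consequently $|U(B^*)|=\Theta(n^{3/2-2\ep})$.  Since $B^*\subseteq B'$, it too avoids $\c{P}_{\le 4}^{\ell+1}$.

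Finally, set $G=K(B^*)$, an $n$-vertex graph.  For the $K_{2,t}$ upper bound, \Cref{lem:K2lcount} applied to $B^*$ with max $U$-degree $D$ gives $\N(K_{2,t},G)=O_c(D^{2+t}|U(B^*)|)=O_c(n^{3/2+t\ep})$.  For the $K_r$ lower bound, each $u\in U(B^*)$ contributes the $\binom{d_{B^*}(u)}{r}$ copies of $K_r$ that live inside the clique $N_{B^*}(u)$, and any fixed $K_r=\{v_1,\dots,v_r\}$ is contained in at most $|N_{B^*}(v_1)\cap N_{B^*}(v_2)|\le\ell$ such neighbourhoods (common $U$-neighbours of $v_1,v_2$ correspond to paths of length $2$, which avoidance caps at $\ell$).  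Therefore
\[\N(K_r,G)\ge \ell^{-1}\sum_{u\in U(B^*)}\binom{d_{B^*}(u)}{r}=\Omega_c\bigl(|U(B^*)|\cdot n^{r\ep}\bigr)=\Omega_c\bigl(n^{3/2+(r-2)\ep}\bigr).\]
I expect the main obstacle to be the verification in Step~1 that splitting genuinely preserves $\c{P}_{\le 4}^{\ell+1}$-avoidance: one must carefully rule out the case that two pieces of the same original $u$ serve as the two $U$-vertices on a new length-$4$ path between some $v,v'\in V$, which uses crucially that the pieces have disjoint $V$-neighbourhoods.
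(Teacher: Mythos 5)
Your construction is correct and follows the same overall architecture as the paper's proof: regularize the $U$-degrees of $B$, form the clique graph, get the $K_{2,t}$ upper bound from \Cref{lem:K2lcount}, and get the $K_r$ lower bound by counting $r$-subsets of neighbourhoods with multiplicity at most $\ell$ (common neighbours being paths of length $2$). The one genuinely different ingredient is how you handle the high-degree vertices of $U$. The paper \emph{deletes} the set $W$ of vertices of degree exceeding $D\ell n^{\ep}$ and shows $W$ carries at most half the edges by a counting argument: if $e(B_W)$ were large, \Cref{lem:P4pExt} would force $|W|$ to be large, and then the degree condition would force $e(B_W)$ to violate \Cref{lem:P4pExt} again. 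You instead \emph{split} each heavy vertex into pieces with disjoint neighbourhoods of size at most $D$, and your verification that this preserves $\c{P}_{\le 4}^{\ell+1}$-avoidance is sound (the projection to $B$ sends paths to paths and is injective precisely because sibling pieces have disjoint $V$-neighbourhoods, so two siblings can never both lie on a path between two $V$-vertices). Your route avoids the paper's counting claim but pays for it elsewhere: you need $e(B)=O(n^{3/2-\ep})$ to control $|U(B')|$ (immediate from \Cref{lem:P4pExt}, or by truncating $B$ to exactly $\lceil 2cn^{3/2-\ep}\rceil$ edges, but worth saying), and you need the second cleanup pass removing low-degree pieces to make the per-vertex bound $\binom{d_{B^*}(u)}{r}=\Omega(n^{r\ep})$ work, whereas the paper gets the same lower bound directly from convexity of $\binom{x}{r}$ applied to the average degree. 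Both arguments share the same harmless edge case at very small $\ep$ (one needs $n^{\ep}$ large compared to $r$ for the binomial coefficient to be nonzero), which the paper absorbs into ``$n$ sufficiently large.''
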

	\begin{proof}
		We may assume $n$ is sufficiently large in terms of $c$, as otherwise we can take $G=K_n$ to give the result.  Let $B = (U,V, E)$ be a bipartite graph with $|U| =m:=n^{3/2-2\ep}$ and $|V| = n$ showing $\ex(n^{3/2-2\ep},n,\c{P}_{\le 4}^{\ell+1})>2cn^{3/2-\ep}$. Let $W\subseteq U$ be the set of vertices $w$ with $\deg_B(w) > D\ell n^{\ep}$ for some large $D$ to be determined.
		
		\begin{claim}
			
			$e(B[U\sm W, V]) > e(B)/2$.\end{claim}
		\begin{proof}
			Consider the bipartite graph $B_W$ induced by $W \cup V$. We are done if $e(B_W)\le cn^{3/2 - \ep}$. Suppose this is not the case, and set $s = |W|$.  Using $n$ sufficiently large in terms of $c$ and that $\ep<1/2$, we find \[e(B_W) > cn^{3/2 - \ep} > 100s + 100n,\] so Lemma~\ref{lem:P4pExt} gives \[s^{1/2} > \frac{e(B_W)}{5\ell^{1/4} n^{3/4}}> \frac{cn^{3/2 - \ep}}{5 n^{3/4} \ell^{1/4}},\] i.e.\ $s > c^2n^{3/2 - 2\ep}/(25 \ell^{1/2})$. Therefore, \[e(B_W) > s \cdot D \ell n^{\ep} > \frac{n^{3/2 - 2\ep}}{25 \ell^{1/2}}\cdot  D \ell n^{\ep} = \ell^{1/2} n^{3/2 - \ep} (D/25),\] which is more than $4 \ell^{1/4} n^{3/4}m^{1/2} + 10(m+ n)$ for large enough $D$, contradicting Lemma~\ref{lem:P4pExt}.
		\end{proof}
		Let $U' = U \sm W$, $B' = B[U'\cup V]$, and $|U'| = m'$. By the claim, $e(B') > e(B)/2$. Let $G = K(B')$ (recalling the definition of $K(B')$ above Lemma~\ref{lem:K2lcount}). We claim $G$ satisfies the conclusion of the lemma.
		
		Because $B'$ avoids $\c{P}_{\le 4}^{\ell+1}$, every $r$-set in $V$ is contained in at most $\ell$ neighborhoods, so the number of $K_r$'s in $G$ is at least
		\[\sum_{u\in U'} \binom{\deg_{B'}(u)}{r}\ell^{-1} \geq m'\binom{e(B)/(2m')}{r}\ell^{-1}\geq \frac{c^r n^{3/2 +(r-2)\ep}}{2^r r! \ell}\]
		for sufficiently large $n$, with this last step using $m'\le m=n^{3/2-2\ep}$.  Note that Lemma~\ref{lem:K2lcount} with the assumption that $B'$ avoids $\c{P}_{\le 4}^{\ell+1}$ implies  $G$ contains at most $O((2\ell)^{2t} (D\ell n^\ep)^{2 + t}n^{3/2 - 2\ep}) = O(n^{3/2 + t\ep})$ copies of $K_{2,t}$.
	\end{proof}
	
	With this we can prove our upper bound result for $K_{2,t}$.
	
	\begin{proof}[Proof of \Cref{thm:graphK2t} Upper Bound]
		Recall that we wish to prove if $t\ge r-1\ge 2$ and either $k\ge n^{\frac{r-2}{2t}}$ or if there exists a bipartite graph $B$ on $U\cup V$ such that $|U|=k^{-2/(r-2)}n^{3/2},\ |V|=n,\ e(B)=\Om(k^{-1/(r-2)} n^{3/2})$, and such that every pair of vertices in $V$ has less than $t$ paths of length at most 4 between them; then there exists an $n$ vertex graph $G$ with $\N(K_r,G)=\Om(kn^{3/2})$ and with
		\[\N(K_{2,t},G)= O(\min\{k^{\f{t}{r-2}}n^{3/2+o(1)},k^{\f{2t^2}{(2t-1)(r-2)+t}}n^{\f{(3t-2)(r-2)+2t}{(2t-1)(r-2)+t}}\}).\]
		If $k\ge n^{\frac{r-2}{2t}}$ then the second term achieves the minimum, and in this case the general bound of \Cref{thm:randomCliques} gives the result.  
		
		It remains to show that if $k\le n^{\frac{r-2}{2t}}$ and if bipartite graphs $B$ as above exists, then there exist graphs with $\N(K_r,G)=\Om(kn^{3/2})$ and $\N(K_{2,t},G)= O(k^{\f{t}{r-2}}n^{3/2})$.  And indeed, this follows from the previous lemma.
	\end{proof}
	As an aside, we note that the constructions considered here are similar to the constructions used in the recent breakthrough of Mattheus and Verstra\"ete~\cite{mattheus2023asymptotics} for off-diagonal Ramsey numbers.  Indeed, as is made more explicit in \cite{conlon2023ramsey}, the construction of \cite{mattheus2023asymptotics} comes from taking a bipartite graph $B$ which avoids certain subgraphs, forming the clique graph $K(B)$, and then taking a random bipartition of each of its cliques.

	\section{Concluding Remarks}
	In this paper we studied how many copies of a graph $F$ another graph $G$ is guaranteed to have if $G$ contains a given number of $K_r$'s.  While our general result \Cref{thm:randomCliques} gives effective bounds on many $F$, it would be desirable to get a better understanding of the problem for specific choices of $F$.  Below we outline two such directions for problems, and for this we recall the notation $\N(F,G)$ which denotes the number of copies of $F$ in $G$.
	
	\textbf{Tight Bounds for $K_{2,t}$}. 
	\Cref{thm:graphK2t} solved the clique supersaturation problem when $F=K_{2,t}$ and $G$ has $kn^{3/2}$ copies of $K_r$ with $k\ge n^{(r-2)/2t}$.  We believe \Cref{thm:graphK2t} should give tight bounds even when $k<n^{(r-2)/2t}$, and in particular we conjecture the following.
	
	\begin{conj}\label{conj:C4}
		There exists $t_0$ such that for $t\ge t_0$ and $1\le k\le n^{1/2t}$, there exists an $n$-vertex graph $G$ with $\Om(kn^{3/2})$ triangles and with
		\[\N(K_{2,t},G)\le k^{t} n^{3/2+o(1)}.\]
	\end{conj}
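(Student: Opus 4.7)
By the conditional portion of \Cref{thm:graphK2t} with $r = 3$, the conjecture reduces to exhibiting, for each $k$ with $1 \le k \le n^{1/2t}$, a bipartite graph $B$ on $(U, V)$ with $|U| = k^{-2}n^{3/2}$, $|V| = n$, $e(B) = \Om(k^{-1}n^{3/2})$, and fewer than $t$ paths of length at most $4$ between any two vertices of $V$. In fact the $n^{o(1)}$ slack in the conjecture permits a path bound of order $n^{o(1)/t}$ when fed through \Cref{lem:K2lcount}. This matches the upper bound of \Cref{lem:P4pExt} up to constants and is the bipartite analogue of the $\ex(n, C_8)$ problem flagged in the authors' concluding remarks.

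I propose to construct such a $B$ via a random algebraic construction in the spirit of Bukh. Take a prime power $q$ with $q^4 \approx n$, identify $V$ with an $n$-element subset of $\F_q^4$, and identify $U$ with an appropriately sized subset of $\F_q^s$ (possibly crossed with a small index set) so that $|U| = k^{-2}n^{3/2}$. Then put an edge between $u$ and $v$ when $f_1(u, v) = \cdots = f_j(u, v) = 0$ for independent uniformly random polynomials $f_1, \ldots, f_j$ of fixed degree $D = D(t)$, with $j$ chosen so that each potential edge is present with probability $\Theta(k/n)$ and the expected edge count is the target $\Om(k^{-1}n^{3/2})$. Standard concentration together with a trim of high-degree $U$-vertices produces the required edge count and max $U$-degree $O(k)$ with high probability.

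The heart of the argument is the claim that for every pair $v_1 \ne v_2 \in V$, the number of length-$\le 4$ paths between them in $B$ is bounded by a constant $C = C(D, j)$ with high probability. These paths correspond to $\F_q$-points on the affine variety defined by four copies of the system $f_i$; for generic polynomials this variety is zero-dimensional of degree $O_{D, j}(1)$, so B\'ezout bounds the point count by $C$, and a first-moment-plus-alteration argument trims the vanishing fraction of exceptional pairs at the cost of $o(k^{-1}n^{3/2})$ edges. Taking $t_0 \ge C$ and feeding the resulting $B$ into \Cref{thm:graphK2t} delivers the graph $G$ of \Cref{conj:C4}. The main obstacle is tuning the dimension $s$, the degree $D$, and the number of constraints $j$ uniformly across the range of $k$ so that the incidence variety remains generically well-behaved; this is exactly the delicate book-keeping that makes the $\ex(n, C_8)$ problem notoriously hard, and the $n^{o(1)}$ slack in the conjecture is precisely what one hopes will cover the losses from such tuning.
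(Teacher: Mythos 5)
This statement is a \emph{conjecture} that the paper leaves open, so there is no proof of record to compare against; what you have written is a proposal for the approach the authors themselves discuss at length in their concluding remarks and explicitly identify as insufficient. The reduction in your first paragraph is fine: via the conditional part of \Cref{thm:graphK2t} with $r=3$, it suffices to build $B$ with $|U|=k^{-2}n^{3/2}$, $|V|=n$, $e(B)=\Om(k^{-1}n^{3/2})$, and fewer than $t$ short paths between $V$-vertices. The gap is in the second half. The Bukh--Conlon random algebraic machinery produces, for each rational exponent $q$ with $k=n^q$, a constant $C$ bounding the path count only after the exceptional (positive-dimensional) fibers are trimmed, and that constant is controlled by Lang--Weil/B\'ezout in terms of the complexity of the incidence variety --- which grows with the integers $b,c$ needed to realize the exponent $q$ as $b/c$. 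The authors state precisely the resulting quantitative failure: the threshold $t_q$ one obtains satisfies $q>1/(2t_q)$, so the pair $(k,t)=(n^q,t_q)$ lies \emph{outside} the range $k\le n^{1/2t}$ demanded by \Cref{conj:C4}. The conjecture needs a single $t_0$ that works uniformly for every $k\in[1,n^{1/2t}]$ with fixed $t$; the ``tuning of $s$, $D$, $j$'' you defer to is exactly the point where the argument breaks, not book-keeping that $n^{o(1)}$ slack can absorb. (At the endpoint $t=2$, $k=n^{1/4}$, the required $B$ is a $C_4$- and $C_8$-free unbalanced bipartite graph of density matching \Cref{lem:P4pExt}, a notoriously open problem.)

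A secondary issue: you assert that the $n^{o(1)}$ slack permits a path bound $\ell$ of order $n^{o(1)/t}$. But \Cref{lem:K2lcount} requires $t>\ell$; its proof uses $t>\ell$ to force the two centers of any $K_{2,t}$ to share a common neighbor in $U$, confining every $K_{2,t}$ to essentially one clique of $K(B)$. If $\ell$ grows with $n$ while $t$ is fixed, that structural step fails and copies of $K_{2,t}$ spread across many cliques are no longer controlled, so you cannot simply feed a superconstant $\ell$ through the lemma. Any genuine progress here likely requires either new explicit constructions in the spirit of Verstra\"ete--Williford valid across a range of $k$, or a substantially refined random-polynomial argument in which the path-count constant is bounded independently of the arithmetic of the exponent $q$.
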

	Note that the second half of \Cref{thm:graphK2t} shows such $G$ exists provided there exist bipartite graphs $B$ with parts of sizes $k^{-2} n^{3/2}$ and $n$ which have $e(B)=\Om(k^{-1} n^{3/2})$, and which have fewer than $t$ paths of length 4 between any two vertices in the part of size $n$.  For example, if $t=2$ and $k=n^{1/4}$, then this is equivalent to finding an $n$-vertex bipartite graph with $\Om(n^{5/4})$ edges and which is $C_4$ and $C_8$-free, which is a notoriously open and difficult problem.  More generally, the $t=2$ case 
	requires finding $C_8$-free unbalanced bipartite graphs of the largest possible density; see for example \cite{naor2005note} for more on this.
	
	The above suggests using \Cref{thm:graphK2t} to solve \Cref{conj:C4} with $t_0=2$ is quite difficult, but there is some hope that one can do this for $t_0$ sufficiently large.  In particular, it is known at $k=n^{1/4}$ that (explicit) bipartite graphs $B$ of this form exist for $t\ge 3$ due to an algebraic construction of Verstra\"ete and Williford~\cite{verstraete2019graphs}, and it is plausible that one could modify their argument to construct $B$ for additional values of $k$.
	
	Another potential avenue is through random polynomial graphs. This approach was used by Conlon~\cite{conlon2019graphs} to show that at $k=n^{1/4}$, there is a $t_0$ such that bipartite graphs $B$ of this form exist for $t\ge t_0$.  By adapting his argument, it is possible to show that for any rational $0\le q\le 1/4$, there exists $t_q$ such that at $k=n^q$ bipartite graphs $B$ of this form exist for $t\ge t_q$.
	
	While the $k=n^q$ result above might be of independent interest, it does not suffice for our purposes.  Indeed, the $t_q$ we obtain will typically have $q>1/2t_q$, and hence the range $k=n^q$ falls outside the scope of \Cref{conj:C4}.  Still, it might be possible to prove \Cref{conj:C4} with a more sophisticated approach using random polynomials.

	\textbf{Other Graphs}.  We believe the bounds of \Cref{thm:graphK2t} for $K_{2,t}$ should extend to all theta graphs $F$ with a similar argument.  However, the situation for general $K_{s,t}$ is entirely unclear, and we leave this as an open problem.
	\begin{prob}\label{prob:main}
		Solve the $K_r$ clique supersaturation problem for $K_{s,t}$ with $r,s,t\ge 3$.
	\end{prob}
	One can check that \Cref{thm:randomCliques} gives better bounds for $K_{s,t}$ compared to the bound \eqref{eq:Gnp} coming from $G_{n,p}$ if and only if
	\[r> \frac{2st-s-t}{s+t-2}=s+\frac{(s-1)(t-s)}{s+t-2}=2s-1-\frac{2(s-1)^2}{s+t-2}.\]
	In particular, \Cref{thm:randomCliques} never gives effective bounds when $r\le s$, and for $r\ge 2s-1$ it always gives a non-trivial bound.  \Cref{thm:randomCliques} used cliques placed uniformly at random, and one might hope that by placing cliques in a more careful way (say with the aid of a bipartite graph $B$ which avoids certain structures), one could obtain bounds better than \eqref{eq:Gnp} for smaller values of $r$.  Unfortunately, the following shows that this is essentially impossible.
	
	\begin{lem}\label{lem:KstFail}
		Let $2\le s\le t$ be integers and $r\le \frac{2st-s-t}{s+t-2}$.  There exists a constant $k_0=k_0(r,s,t)$ such that if $G$ is an $n$-vertex graph which is the union of $u$ cliques of size $m$ with $um^r=k n^{r-{r\choose 2}/s}$ where $k\ge k_0$, and with every edge contained in at most $O(1)$ of the $u$ cliques, then
		\[\N(K_{s,t},G)=\Om\left(k^{st/{r\choose 2}}n^{s}\right).\]
	\end{lem}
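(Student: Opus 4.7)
The plan is to establish two complementary lower bounds on $\N(K_{s,t}, G)$ and combine them via a case analysis on the clique size $m$. Since each edge lies in $O(1)$ of the $u$ cliques, the underlying edge count satisfies $e(G) = \Theta(um^2)$, and whenever this exceeds $Cn^{2-1/s}$, \Cref{prop:ESKst} yields the first bound
\[
\N(K_{s,t},G) = \Omega\!\left((um^2)^{st}/n^{2st-s-t}\right).
\]
For the second bound, each clique of size $m \ge s+t$ contains $\Theta(m^{s+t})$ copies of $K_{s,t}$, and since any fixed $K_{s,t}$-subgraph is contained in at most $O(1)$ of the $u$ cliques (the edge-multiplicity hypothesis applied to any one of its edges), summing over the cliques gives
\[
\N(K_{s,t},G) = \Omega(u m^{s+t}).
\]

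Using the constraint $um^r = kn^{r-\binom{r}{2}/s}$, the first bound decreases in $m$ while the second increases in $m$, so I would identify the threshold
\[
T := k^{(r+1)/(r(r-1))}\, n^{1 - (r+1)/(2s)},
\]
which by direct substitution satisfies $um^2|_{m=T} = k^{1/\binom{r}{2}}n^{2-1/s}$, making the first bound equal to exactly $\Theta(k^{st/\binom{r}{2}} n^s)$ at $m=T$. The proof then splits into Case A ($m\le T$), where the first bound does the job (and taking $k_0 \ge C^{\binom{r}{2}}$ ensures the hypothesis of \Cref{prop:ESKst}), and Case B ($m\ge T$), where the second bound must do the job.

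The crucial step is verifying that the second bound at $m=T$ is also at least $\Omega(k^{st/\binom{r}{2}} n^s)$. A direct computation shows that the exponents of $k$ and $n$ in $um^{s+t}|_{m=T}$ differ from those of the target by equal magnitudes $|(r+1)(s+t) - 2(st+r)|$, scaled by $r(r-1)$ and $2s$ respectively. The hypothesis $r \le (2st-s-t)/(s+t-2)$, which rearranges to $(r+1)(s+t) \le 2(st+r)$, is exactly what gives these discrepancies the right sign, and the required inequality reduces to $k \le n^{\binom{r}{2}/s}$. This last bound is automatic: the edge-multiplicity condition gives $\N(K_r,G) \ge \Omega(um^r) = \Omega(kn^{r-\binom{r}{2}/s})$, while trivially $\N(K_r,G)\le \binom{n}{r}$, so $k = O(n^{\binom{r}{2}/s})$. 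The main obstacle is mostly bookkeeping: carefully tracking the exponent identities across the two regimes, and ensuring auxiliary conditions (notably $m \ge s+t$ for the within-clique count) hold throughout for $k, n$ sufficiently large.
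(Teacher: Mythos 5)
Your proof is correct and follows essentially the same two-case strategy as the paper's proof sketch: apply \Cref{prop:ESKst} to $e(G)=\Theta(um^2)$ when $m$ is below a threshold, and count copies of $K_{s,t}$ inside the individual cliques otherwise. Your threshold $T=k^{(r+1)/(r(r-1))}n^{1-(r+1)/(2s)}$ is the correct balancing point (the paper's sketch states the $k$-exponent of this threshold slightly differently, but yours is the one at which $um^2=k^{1/\binom{r}{2}}n^{2-1/s}$), and your verification that the hypothesis on $r$ together with $k=O(n^{\binom{r}{2}/s})$ makes $um^{s+t}\vert_{m=T}=\Omega(k^{st/\binom{r}{2}}n^s)$ supplies exactly the step the paper leaves implicit.
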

	We note that the quantity $um^r$ is roughly the number of copies of $K_r$ within one of the $u$ cliques making up $G$.
	
	\begin{proof}[Proof Sketch]
		If $m\ll k^{\frac{1}{r(r-1)}} n^{\frac{2s-r-1}{2s}}$, then a small computation together with the fact that each edge is in at most $O(1)$ cliques shows $e(G)\gg k^{1/{r\choose 2}} n^{2-1/s}$, from which the result follows by \Cref{prop:ESKst}.  Otherwise, counting copies of $K_{s,t}$ within each of the $u$ cliques gives
		\[\N(K_{s,t},G)=\Om(u m^{s+t})=\Om(kn^{r-{r\choose 2}/s}\cdot m^{s+t-r})=\Om(k^{st/{r\choose 2}}n^s),\]
		where this last step implicitly uses the upper bound on $r$.
	\end{proof}
	
	\Cref{lem:KstFail} shows that any non-trivial construction for \Cref{prob:main} when  $r\le s$ must look substantially different from all the constructions used throughout this paper.  We feel like such constructions should not exist, and as such we conjecture the following.

	\begin{conj}\label{conj:Kst}
		If $2\le r\le s\le t$, then there exists a constant $k_0$ such that if $G$ is an $n$-vertex graph with $\N(K_r,G)=k n^{r-{r\choose 2}/s}$ and $k\ge k_0$, then
		\[\N(K_{s,t},G)\ge k^{st/{r\choose 2}} n^{s-o(1)}.\]
	\end{conj}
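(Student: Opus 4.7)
The plan is to follow the dichotomy used in the proof of Theorem~\ref{thm:graphK2t}, splitting on the edge count of $G$. If $e(G) \ge C k^{1/\binom{r}{2}} n^{2-1/s}$ for a sufficiently large $C = C(s,t)$, then Proposition~\ref{prop:ESKst} applied with edge-density parameter $k^{1/\binom{r}{2}}$ immediately gives $\N(K_{s,t},G) = \Omega(k^{st/\binom{r}{2}} n^s)$, matching the conjectured bound with no $n^{o(1)}$ slack; note that this already disposes of the $r=2$ case of the conjecture entirely. The substantive case is therefore the sparse regime $e(G) \lesssim k^{1/\binom{r}{2}} n^{2-1/s}$, in which averaging forces most edges to lie in $\ell \gtrsim k^{1-1/\binom{r}{2}} n^{r-2-(\binom{r}{2}-1)/s}$ copies of $K_r$, and hence (by Lemma~\ref{lem:commonNeighbors}) to have $\gtrsim \ell^{1/(r-2)}$ common neighbors.

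For $r = s$ I would then finish by mimicking the greedy extension used for $K_{2,t}$: start from a random $K_{r-1}$, apply Lemma~\ref{lem:commonNeighbors} to extract many common neighbors, pick one to extend to a $K_s$-clique, and then choose $t$ further common neighbors of this $K_s$ to form the $t$-side of a $K_{s,t}$. Tracking the counts via a convexity argument in the spirit of the bound $\binom{x}{t} \ge x^t/t^t - 1$ from the proof of Theorem~\ref{thm:graphK2t} should yield the correct exponent in $k$, and the details would largely parallel the $K_{2,t}$ case.

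The main obstacle is the case $r < s$, where Lemma~\ref{lem:commonNeighbors} gives no direct control on the common neighborhood of a set of size $\ge r$. A plausible workaround is a dependent-random-choice-style argument: bound $\N(K_{s,t},G)$ from below, up to constants, by $\sum_{B \in \binom{V(G)}{t}} \binom{|N(B)|}{s}$, and show that a positive fraction of $t$-sets $B$ admit $|N(B)| \gtrsim k^{t/\binom{r}{2}} n^{1-t/s-o(1)}$, the size predicted by $G_{n,p}$ at matching $K_r$-count. A concrete attempt would be to sample $B$ from the common neighborhood of a random $K_r$, biasing $B$ toward high codegree, and then to run moment estimates driven by the $K_r$-count rather than by $e(G)$. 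Making this rigorous appears to require a genuinely new input, and the $n^{o(1)}$ slack in the statement hints that any such proof will entail a regularization or averaging loss, perhaps through hypergraph containers or a sparse regularity argument.
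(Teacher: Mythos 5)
The statement you are trying to prove is stated in the paper as an open conjecture (\Cref{conj:Kst}); the paper offers no proof, only the observation that \Cref{prop:ESKst} settles the case $r=2$ and that even the first open case $r=s=3$ (\Cref{conj:K3t}) appears to require new ideas. Your dense-regime reduction is correct and is exactly the paper's remark: if $e(G)\ge Ck^{1/\binom{r}{2}}n^{2-1/s}$ then \Cref{prop:ESKst} gives $\Omega(k^{st/\binom{r}{2}}n^s)$ copies of $K_{s,t}$, and for $r=2$ this is the whole problem. The substantive content of the conjecture is the sparse regime for $r\ge 3$, and there your proposal has a genuine gap.

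The gap is in the case $r=s$, which you treat as routine ("the details would largely parallel the $K_{2,t}$ case"). The mechanism that makes the $K_{2,t}$ argument work is that \Cref{lem:commonNeighbors} converts "the edge $e$ lies in $\deg(e)$ copies of $K_r$" into a lower bound on the codegree of a $2$-set, and $2<r$ is exactly the condition under which that lemma says anything. To build $K_{s,t}$ you need $s$-sets with at least $t$ common neighbors, and when $|S|=s\ge r$ the lemma is vacuous: knowing that an $s$-set spans or lies in many $K_r$'s gives no lower bound on its common neighborhood (for $r=s$ the $s$-set \emph{is} the clique, and its codegree can be $0$). Your greedy scheme only ever certifies large codegrees for sets of size at most $r-1$, so it produces $K_{r-1,t'}$-type structures, not $K_{s,t}$. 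This is not a presentational issue: the authors state that adapting the \Cref{thm:graphK2t} argument to $r=s=3$ yields only about $n^{7/3}$ copies of $K_{3,t}$ at $k=O(1)$, far short of the conjectured $n^{3-o(1)}$. For $r<s$ you candidly flag that a genuinely new input is needed; the same is true for $r=s\ge 3$, so the proposal does not constitute a proof of the conjecture beyond the already-known $r=2$ case.
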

	That is, if $r\le s$, then we predict every graph $G$ with a given number of $K_r$'s contains at least as many $K_{s,t}$'s as the random graph with the same number of $K_r$'s.  It is perhaps natural to extend this conjecture to all $r\le \frac{2s-s-t}{s+t-2}$ since this is the full range for \Cref{lem:KstFail}, but we find this quantity too strange to make any statements with confidence.
	
	Note that \Cref{prop:ESKst} implies \Cref{conj:Kst} for $r=2$.  Thus the next open case is $r=s=3$, which we restate below.
	\begin{conj}\label{conj:K3t}
		For all $t\ge 3$ there exists a constant $k_0$ such that if $G$ is an $n$-vertex graph with $k n^{2}$ triangles and $k\ge k_0$, then
		\[\N(K_{3,t},G)\ge k^{t} n^{3-o(1)}.\]
	\end{conj}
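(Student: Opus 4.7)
My plan would be to attack the conjecture by a case analysis based on the edge count $e(G)$, comparing to the random-graph benchmark $k^{1/3} n^{5/3}$: this is the edge count of $G_{n,p}$ with $p = (k/n)^{1/3}$, the random graph whose triangle count is $\Theta(k n^2)$ and whose $K_{3,t}$ count is $\Theta(k^t n^3)$ (matching the conjectured bound). The Kruskal-Katona theorem already gives the weaker baseline $e(G) \ge c k^{2/3} n^{4/3}$, and together with $T=\N(K_3,G)\le \frac{1}{2}\sum_x \binom{d_x}{2}$ it gives $\sum_x d_x^2 \ge 6 k n^2$.

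In the \emph{dense} regime $e(G) \ge k^{1/3} n^{5/3}/n^{o(1)}$, I would apply \Cref{prop:ESKst} with $s = 3$ directly: writing $e(G) = k' n^{5/3}$ gives $k'\ge k^{1/3}/n^{o(1)}$, and hence $\N(K_{3,t}, G) = \Om(k'^{3t} n^3) = \Om(k^t n^{3-o(1)})$. In the \emph{sparse} regime $e(G) \ll k^{1/3} n^{5/3}$, the average codegree per edge is $3T/e(G)$, which is much larger than the value $k^{2/3}n^{1/3}$ seen in $G_{n,p}$, so triangles are locally concentrated; the extremal examples should be essentially single cliques or small unions of cliques of size $m \approx k^{1/3} n^{2/3}$. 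I would apply a regularization step in the style of \Cref{lem:minDegree} to peel off such dense local clusters, using the observation that a clique on $m\approx k^{1/3} n^{2/3}$ vertices contributes $\Om(m^{3+t}) = \Om(k^{(3+t)/3} n^{2(3+t)/3})$ copies of $K_{3,t}$, which comfortably exceeds $k^t n^3$ for $k \ll n$ and $t \ge 3$.

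The main obstacle is the \emph{intermediate} regime, where $G$ interpolates between edge-dense and clique-like structures and neither heuristic is individually strong enough. A uniform way to phrase what is needed is a lower bound of the form $\sum_x d_x^3 \ge k n^{3 - o(1)}$: combining this with Jensen's inequality applied to the convex function $x\mapsto\binom{x}{t}$ (together with a standard truncation discarding triples of codegree below $t$) would yield $\sum_{\{u,v,w\}}\binom{d_3(u,v,w)}{t} \ge k^t n^{3-o(1)}$, which is the conjecture up to constants (here $d_3$ denotes triple codegree, and $\sum_x \binom{d_x}{3}$ equals this triple-codegree sum). Unfortunately, combining $\sum d_x^2 \ge 6 k n^2$ with the Cauchy-Schwarz inequality $(\sum d_x^2)^2 \le (\sum d_x)(\sum d_x^3)$ only gives the weaker bound $\sum_x d_x^3 \ge 18 k^2 n^4/e(G) \ge k^{5/3} n^{7/3}$ in the dense regime, which falls short of the target by a factor of $(n/k)^{2/3}$. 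Closing this gap, or otherwise handling the intermediate regime, is the crux of the problem; as the authors point out, the difficulty appears to be intimately tied to the open Turán-type questions about bipartite subdivision-free graphs (such as the $C_8$ problem appearing in \Cref{conj:C4}), which seem to enter as essential ingredients on any route to the sharp bound.
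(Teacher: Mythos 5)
You have not given a proof, and to be clear, the paper does not contain one either: \Cref{conj:K3t} is stated as an open conjecture, and the authors' only remark about it is that they believe an adaptation of the argument for \Cref{thm:graphK2t} yields a lower bound of roughly $n^{7/3}$ for constant $k$, with new ideas needed to go further. Your proposal is an honest sketch of an attack rather than an argument: the dense regime via \Cref{prop:ESKst} with $s=3$ and the clique-like sparse regime are fine as far as they go, but the entire content of the conjecture lives in the intermediate regime that you explicitly leave open. Your Cauchy--Schwarz computation bottoming out at $\sum_x d_x^3 \gtrsim k^{5/3} n^{7/3}$ (hence a $K_{3,t}$ count far below $k^t n^3$) lands exactly on the $n^{7/3}$ barrier the authors describe, so your diagnosis of where the difficulty sits is consistent with theirs.

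Two smaller cautions about the sketch itself. First, the step ``apply Jensen to $\binom{x}{t}$ given a lower bound on $\sum_x d_x^3$'' conflates the vertex-degree cubes with the triple-codegree sum: $\sum_x \binom{d_x}{3}$ counts triples inside neighborhoods, i.e.\ $\sum_{\{u,v,w\}} d_3(u,v,w)$ where the sum is over \emph{all} triples, but the triples $\{u,v,w\}$ relevant to a $K_{3,t}$ need not span triangles or even edges, and convexity over $\binom{n}{3}$ triples is the right normalization only if you can show the codegree mass is not concentrated on too few triples --- which is again the same structural question. Second, the sparse-regime claim that extremal examples ``should be essentially unions of cliques'' is plausible but is precisely what \Cref{lem:KstFail} warns cannot be leveraged for upper-bound constructions when $r\le s$, and proving the corresponding lower-bound structure theorem (that every graph in the intermediate regime behaves no worse than the random graph) is the open problem. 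So: correct identification of the obstruction, no resolution of it, and nothing in the paper to compare against since the statement remains unproven.
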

	We believe we can adapt the argument of \Cref{thm:graphK2t} for this problem to prove a lower bound of roughly $n^{7/3}$ when $k$ is a large constant, but it seems like new ideas are needed to improve upon this.

	\bibliographystyle{abbrv}
	\bibliography{refs}
\end{document}